\theoremstyle{plain}
\newtheorem{defn}{Definition}
\newtheorem{lem}{Lemma}
\newtheorem{thm}{Theorem}
\newcommand{\abs}[1]{\left| #1 \right|}
\newcommand{\expect}[1]{\mathbb{E}\left[#1\right]}
\newcommand{\paren}[1]{\left( #1 \right)}
\newcommand{\Z}{\mathbb{Z}}
\newcommand{\wt}[1]{\widetilde{#1}}
\newcommand{\argmin}{\operatorname{argmin}}
\newcommand{\bZ}{\bm{Z}}
\newcommand{\bY}{\bm{Y}}
\newcommand{\dd}{\mathrm{d}}
\newcommand{\Pav}{\overline{P}}
\newcommand{\pmax}{P_{\text{max}}}
\newcommand{\pmin}{P_{\text{min}}}
\newcommand{\oW}{\overline{W}}
\newcommand{\pconst}{P_{\text{const}}}
\newcommand{\rmax}{R^{\text{max}}}
\newcommand{\pirand}{\pi_{\text{rand}}}
\begin{document}

\title{Delay and Power-Optimal Control in Multi-Class Queueing Systems}

\author{\large{Chih-ping~Li,~\IEEEmembership{Student Member,~IEEE} and Michael~J.~Neely,~\IEEEmembership{Senior Member,~IEEE}}%
\thanks{Chih-ping Li (web: http://www-scf.usc.edu/$\sim$chihpinl) and Michael J. Neely (web: http://www-rcf.usc.edu/$\sim$mjneely) are with the Department of Electrical Engineering, University of Southern California, Los Angeles, CA 90089, USA.}    %
\thanks{This material is supported in part  by one or more of the following: the NSF Career grant CCF-0747525, the Network Science Collaborative Technology Alliance sponsored by the U.S. Army Research Laboratory.}%
}

%


\maketitle

\begin{abstract}
We consider optimizing average queueing delay and average power consumption in a nonpreemptive multi-class $M/G/1$ queue with dynamic power control that affects  instantaneous service rates. Four problems are studied: (1) satisfying per-class average  delay constraints; (2) minimizing a separable convex function of average delays subject to per-class delay constraints; (3) minimizing average power consumption subject to per-class delay constraints; (4) minimizing a separable convex function of average delays subject to an average power constraint. Combining an achievable region approach in queueing systems and the Lyapunov optimization theory suitable for optimizing dynamic systems with time average constraints, we propose a unified framework to solve the above problems. The solutions are variants of dynamic $c\mu$ rules, and implement weighted priority policies in every busy period, where weights are determined by past queueing delays in all job classes. Our solutions  require limited statistical knowledge of arrivals and service times, and no statistical knowledge is needed in the first problem. Overall, we provide a new set of tools for stochastic optimization and control over multi-class queueing systems with time average constraints.
\end{abstract}
\section{Introduction}
Stochastic scheduling over multi-class queueing systems  has important applications such as CPU scheduling, request processing in web servers, and QoS provisioning to different types of traffic in a telecommunication network. In these systems, power management is increasingly important due to their massive energy consumption. To study this problem, in this paper we consider a single-server multi-class queueing system whose instantaneous service rate is controllable by dynamic power allocations. This is modeled as a nonpreemptive multi-class $M/G/1$ queue with $N$ job classes $\{1, \ldots, N\}$, and the goal is to optimize average queueing delays of all job classes and average power consumption in this queueing network. We consider  four delay and power control problems:

\begin{enumerate}
\item  Designing a policy that yields average queueing delay $\oW_{n}$ of class $n$ satisfying $\oW_{n} \leq d_{n}$ for all classes, where $\{d_{1}, \ldots, d_{N}\}$ are given feasible delay bounds. Here we assume a fixed power allocation and no power control.
\item Minimizing a separable convex function $\sum_{n=1}^{N} f_{n}(\oW_{n})$ of average queueing delays $(\oW_{n})_{n=1}^{N}$ subject to delay constraints $\oW_{n} \leq d_{n}$ for all  classes $n$; assuming a fixed power allocation and no power control.
\item Under dynamic power allocation,  minimizing average power consumption subject to delay constraints $\oW_{n} \leq d_{n}$ for all  classes $n$.
\item Under dynamic power allocation, minimizing a separable convex function $\sum_{n=1}^{N} f_{n}(\oW_{n})$ of average queueing delays $(\oW_{n})_{n=1}^{N}$ subject to an average power constraint.
\end{enumerate}
These problems are presented with increasing complexity for the readers to gradually familiarize themselves with the methodology we use to attack these problems.

Each of the above problems is highly nontrivial, thus novel yet simple approaches are needed. This paper provides such a framework by connecting two powerful stochastic optimization theories: The \emph{achievable region approach} in queueing systems, and the \emph{Lyapunov optimization theory} in wireless networks. In queueing systems, the achievable region approach that treats optimal control problems as mathematical programming ones has been fruitful; see~\cite{Yao02lecture,  BPT94, Ber95, Nin09} for a detailed survey. In a nonpreemptive multi-class $M/G/1$ queue, it is known that the collection of all feasible average queueing delay vectors form a special polytope (a base of a polymatroid) with vertices being the performance vectors of strict priority policies (\cite{FaG88a}, see Section~\ref{sec:1104} for more details). As a result, every feasible average queueing delay vector is attainable by a randomization of strict priority policies. Such randomization can be implemented in a framed-based style, where a priority ordering is randomly deployed in every busy period using a probability distribution that is used in all busy periods (see Lemma~\ref{lem:2201} in Section~\ref{sec:1104}). This view of the  delay performance region is useful in the first two delay control problems.

In addition to queueing delay, when dynamic power control is part of the decision space, it is natural to consider dynamic policies that allocate a fixed power in every busy period. The resulting joint power and delay performance region is then spanned by frame-based randomizations of power control and strict priority policies. We treat the last two delay and power control problems as stochastic optimization  over such a performance region (see Section~\ref{sec:1103} for an example).

With the above characterization of performance regions, we solve the  four control problems using  \emph{Lyapunov optimization theory}. This theory is originally developed for stochastic optimal control over time-slotted wireless networks~\cite{TaE92,TaE93}, later extended by~\cite{GNT06,Nee03thesis} that allow optimizing various performance objectives such as average power~\cite{Nee06} or throughput utility~\cite{NML08},  and recently generalized to optimize dynamic systems that have a renewal structure~\cite{Nee09conf,Nee10conf,Nee10book,LaN10arXivb}.  The Lyapunov optimization theory transforms  time average constraints into virtual queues that need to be stabilized. Using a Lyapunov drift argument, we construct frame-based policies to solve the four control problems. The resulting policy is a sequence of \emph{base policies} implemented frame by frame, where the collection of all base policies span the performance region through time sharing or randomization.  The base policy used in each frame is chosen by minimizing a ratio of an expected ``drift plus penalty''  sum over the expected frame size, where the ratio is a function of past queueing delays in all job classes.  In this paper the base policies are nonpreemptive strict priority policies with deterministic power allocations. 

Our methodology is as follows. By characterizing the performance region using the collection of all randomizations of base policies, for each control problem, there exists an optimal random mixture of base policies that solves the problem. Although the probability distribution that defines the optimal random mixture is unknown, we construct a dynamic policy using Lyapunov optimization theory. This policy makes greedy decisions in every frame,  stabilizes all virtual queues (thus satisfying all time average constraints), and yields near-optimal performance. The \emph{existence} of the optimal randomized policy is essential to prove these results. 


In our policies for the four control problems, requests of different classes are prioritized by a dynamic $c\mu$ rule~\cite{Yao02lecture} which, in every busy period, assigns priorities in the decreasing order of weights associated with each class. The weights of all classes are updated at the end of every busy period by simple queue-like rules (so that different priorities may be assigned in different busy periods), which capture the running difference between the current and the  desired performance. The dynamic $c\mu$ rule in the first problem does not require any statistical knowledge of arrivals and service times. The policy for the second problem requires only the mean but not higher moments of arrivals and service times. In the last two problems with dynamic power control, beside the dynamic $c\mu$ rules, a power level is allocated  in every busy period by optimizing a weighted sum of power and power-dependent average delays. The policies for the third and the last problem require the mean and the first two moments of arrivals and service times, respectively, because of dynamic power allocations.

In each of the last three problems, our policies yield performance that is at most  $O(1/V)$ away from the optimal, where $V>0$ is a control parameter that can be chosen sufficiently large to yield near-optimal performance. The tradeoff of choosing large $V$ values is the amount of time  required to meet the time average constraints. In this paper we also propose a \emph{proportional delay fairness} criterion, in the same spirit as the well-known rate proportional fairness~\cite{Kel97} or utility proportional fairness~\cite{WPL06}, and show that the corresponding delay objective functions are quadratic. Overall, since our policies use dynamic $c\mu$ rules with weights of simple updates, and require limited statistical knowledge, they scale gracefully with the number of job classes and are suitable for online implementation.

In the literature, work~\cite{FaG88b} characterizes multi-class $G/M/c$ queues that have polymatroidal performance regions, and provides two numerical methods to minimize a separable convex function of average delays as an unconstrained static optimization problem. But in~\cite{FaG88b} it is unclear how to control the queueing system to achieve the  optimal performance. Minimizing a convex holding cost in a single-server multi-class queue is  formulated as a restless bandit problem in~\cite{AGN03,GLA03}, and Whittle's index policies~\cite{Whi88} are constructed as a heuristic solution. Work~\cite{Mie95}  proposes a generalized $c\mu$ rule to maximize a convex holding cost over a finite horizon in a multi-class queue, and shows it is asymptotically optimal under heavy traffic. This paper provides a dynamic control algorithm for the minimization of convex functions of average delays. Especially, we consider additional time average power and delay constraints, and our solutions require limited statistical knowledge and have provable near-optimal performance.

This paper also applies to power-aware scheduling problems in computer systems. These problems are widely studied in different contexts, where two main analytical tools are competitive analysis~\cite{ALW10conf, BCP09conf, AWT09conf,AaF07,BPS07conf} and $M/G/1$-type queueing theory (see~\cite{WAT09conf} and references therein), both used to optimize metrics such as a weighted sum of average power and delay. This paper presents a fundamentally different approach for more directed control over average power and delays, and considers a multi-class setup with time average constraints.

In the rest of the paper, the detailed queueing model is given in Section~\ref{sec:702}, followed by a summary of useful $M/G/1$ properties in Section~\ref{sec:1104}. The four delay-power control problems are solved in Section~\ref{sec:1803}-\ref{sec:1801}, followed by simulation results.


\section{Queueing Model} \label{sec:702}

We only consider \emph{queueing delay}, not \emph{system delay} (queueing plus service) in this paper. System delay can be easily incorporated since, in a nonpreemptive system,  average queueing and system delay differ only by a constant (the average service time). We will use ``delay'' and ``queueing delay'' interchangeably in the rest of the paper.
 
Consider a single-server queueing system processing jobs categorized into $N$ classes. In each class $n\in \{1, 2, \ldots, N\}$, jobs arrive as a Poisson process with rate $\lambda_{n}$. Each class $n$ job has size $S_{n}$. We assume $S_{n}$ is i.i.d. in each class, independent across classes, and that the first four moments of $S_{n}$ are  finite for all classes $n$. The system processes arrivals nonpreemptively with instantaneous service rate $\mu(P(t))$, where $\mu(\cdot)$ is a concave, continuous, and nondecreasing function of the allocated power $P(t)$ (the concavity of rate-power relationship is observed in computer systems~\cite{KaM08book,WAT09conf,GHD09conf}).  Within each class, arrivals are served in a first-in-first-out fashion. We consider a frame-based system, where each frame consists of an idle period and the following busy period. Let $t_{k}$ be the start of the $k$th frame for each $k\in\Z^{+}$; the $k$th frame is $[t_{k}, t_{k+1})$. Define $t_{0} = 0$ and assume the system is initially empty. Define $T_{k} \triangleq t_{k+1} - t_{k}$ as the size of frame $k$. Let $A_{n,k}$ denote the set of class $n$ arrivals in frame $k$. For each job $i\in A_{n,k}$, let $W_{n,k}^{(i)}$ denote its queueing delay.

The control over this queueing system is power allocations and job scheduling across all classes. We restrict to the following frame-based policies that are both \emph{causal} and \emph{work-conserving}:\footnote{\emph{Causality} means that every control decision depends only on the current and past states of the system; \emph{work-conserving} means that the server is never idle when there is still work to do.}
\begin{quote}
In every frame $k\in\Z^{+}$, use a fixed power level $P_{k} \in[\pmin, \pmax]$ and a  nonpreemptive strict priority policy $\pi(k)$ for the duration of the busy period in that frame. The decisions are possibly random.
\end{quote}
In these policies, $\pmax$ denotes the maximum power allocation. We assume $\pmax$ is finite, but sufficiently large to ensure feasibility of the desired delay constraints. The minimum power $\pmin$ is chosen to be large enough so that the queue is stable even if power $\pmin$ is used for all time. In particular, for stability we need 
\[
\sum_{n=1}^{N} \lambda_{n} \frac{\expect{S_{n}}}{ \mu(\pmin)} < 1 \Rightarrow \mu(\pmin) >  \sum_{n=1}^{N} \lambda_{n} \expect{S_{n}}.
\]
The strict priority rule $\pi(k) = (\pi_{n}(k))_{n=1}^{N}$ is represented by a permutation of $\{1, \ldots, N\}$, where  class $\pi_{n}(k)$ gets the $n$th highest priority.

The motivation of focusing on the above frame-based policies is to simplify the control of the queueing system to achieve complex performance objectives. Simulations in~\cite{AGM99}, however, suggest that this method may incur higher variance in performance than  policies that take control actions based on  job occupancies in the queue.  Yet, job-level scheduling seems difficult to attack problems considered in this paper. It may involve solving high-dimensional (partially observable) Markov decision processes with time average power and delay constraints and convex holding costs.

 \subsection{Definition of Average Delay} \label{sec:2404}
The average delay under policies we propose later may not have well-defined limits. Thus, inspired by~\cite{Nee10conf}, we define
\begin{equation} \label{eq:1103}
\oW_{n} \triangleq \limsup_{K\to\infty} \frac{  \expect{\sum_{k=0}^{K-1} \sum_{i\in A_{n,k}} W_{n,k}^{(i)}} }{\expect{\sum_{k=0}^{K-1} \abs{A_{n,k}}}}
\end{equation}
as the average delay of class $n\in\{1, \ldots, N\}$, where $\abs{A_{n,k}}$ is the number of class $n$ arrivals during frame $k$. We only consider delay sampled at frame boundaries for simplicity. To verify~\eqref{eq:1103}, note that the running average delay of class $n$ jobs up to time $t_{K}$ is equal to
\[
\frac{\sum_{k=0}^{K-1} \sum_{i\in A_{n,k}} W_{n,k}^{(i)}}{\sum_{k=0}^{K-1} \abs{A_{n,k}}} = \frac{\frac{1}{K}\sum_{k=0}^{K-1} \sum_{i\in A_{n,k}} W_{n,k}^{(i)}}{ \frac{1}{K} \sum_{k=0}^{K-1} \abs{A_{n,k}}}.
\]
Define
\[
w_{n}^{\text{av}} \triangleq \lim_{K\to\infty} \frac{1}{K} \sum_{k=0}^{K-1} \sum_{i\in A_{n,k}} W_{n,k}^{(i)}, \ a_{n}^{\text{av}} \triangleq \lim_{K\to\infty} \frac{1}{K} \sum_{k=0}^{K-1} \abs{A_{n,k}}.  \\
\]
If both limits $w_{n}^{\text{av}}$ and $a_{n}^{\text{av}}$ exist, the ratio $w_{n}^{\text{av}}/a_{n}^{\text{av}}$ is the limiting average delay for class $n$. In this case, we get
\begin{equation} \label{eq:1102}
\begin{split}
\oW_{n} &= \frac{ \lim_{K\to\infty} \expect{ \frac{1}{K} \sum_{k=0}^{K-1} \sum_{i\in A_{n,k}} W_{n,k}^{(i)}} }{\lim_{K\to\infty} \expect{ \frac{1}{K}\sum_{k=0}^{K-1} \abs{A_{n,k}}}} \\
&= \frac{  \expect{ \lim_{K\to\infty} \frac{1}{K} \sum_{k=0}^{K-1} \sum_{i\in A_{n,k}} W_{n,k}^{(i)}} }{ \expect{ \lim_{K\to\infty} \frac{1}{K}\sum_{k=0}^{K-1} \abs{A_{n,k}}}} = \frac{w_{n}^{\text{av}}}{a_{n}^{\text{av}}},
\end{split}
\end{equation}
which shows $\oW_{n}$ is indeed the limiting average delay.\footnote{The second equality in~\eqref{eq:1102}, where we pass the limit into the expectation, can be proved by a generalized Lebesgue's dominated convergence theorem stated as follows. Let $\{X_{n}\}_{n=1}^{\infty}$ and $\{Y_{n}\}_{n=1}^{\infty}$ be two sequences of random variables such that: (1) $ 0\leq \abs{X_{n}} \leq Y_{n}$ with probability $1$ for all $n$; (2) For some random variables $X$ and $Y$, $X_{n}\to X$ and $Y_{n}\to Y$ with probability $1$; (3) $\lim_{n\to\infty} \expect{Y_{n}} = \expect{Y} < \infty$. Then $\expect{X}$ is finite and $\lim_{n\to\infty} \expect{X_{n}} = \expect{X}$. The details are omitted for brevity.} The definition in~\eqref{eq:1103} replaces $\lim$ by $\limsup$ to guarantee it is well-defined.


\section{Preliminaries} \label{sec:1104}

This section summarizes useful properties of a nonpreemptive multi-class $M/G/1$ queue. Here we assume a fixed power allocation $P$ and a fixed service rate $\mu(P)$ (this is extended in Section~\ref{sec:powercontrol}). Let $X_{n} \triangleq S_{n}/\mu(P)$ be the service time of a class $n$ job. Define $\rho_{n} \triangleq \lambda_{n} \expect{X_{n}}$. Fix an arrival rate vector $(\lambda_{n})_{n=1}^{N}$ satisfying $\sum_{n=1}^{N} \rho_{n} < 1$; the rate vector $(\lambda_{n})_{n=1}^{N}$ is supportable in the queueing network.

For each $k\in \Z^{+}$, let $I_{k}$ and $B_{k}$ denote the $k$th idle and busy period, respectively; the frame size $T_{k} = I_{k} + B_{k}$. The distribution of $B_{k}$ (and $T_{k}$) is fixed under any work-conserving policy, since the sample path of unfinished work in the system is independent of scheduling policies. Due to the memoryless property of Poisson arrivals, we have $\expect{I_{k}} = 1/(\sum_{n=1}^{N} \lambda_{n})$ for all $k$. For the same reason, the system renews itself at the start of each frame. Consequently, the frame size $T_{k}$, busy period $B_{k}$, and the per-frame job arrivals $\abs{A_{n,k}}$ of class $n$, are all i.i.d. over $k$. Using renewal reward theory~\cite{Ros96book} with renewal epochs defined at frame boundaries $\{t_{k}\}_{k=0}^{\infty}$, we have:
\begin{align}
&\expect{T_{k}} = \frac{\expect{I_{k}}}{1-\sum_{n=1}^{N} \rho_{n}} = \frac{1}{(1-\sum_{n=1}^{N} \rho_{n})\sum_{n=1}^{N} \lambda_{n}} \label{eq:2102} \\
&\expect{\abs{A_{n,k}}} = \lambda_{n} \expect{T_{k}},\, \forall n\in\{1, \ldots, N\},\, \forall k\in\Z^{+}. \label{eq:2101}
\end{align}

It is useful to consider the randomized policy $\pirand$ that is defined by a given probability distribution over all possible $N!$ priority orderings. Specifically, policy $\pirand$ randomly selects priorities at the beginning of every new frame according to this distribution, and implements the corresponding nonpreemptive priority rule for the duration of the frame. Again by renewal reward theory, the average queueing delays $(\oW_{n})_{n=1}^{N}$ rendered by a $\pirand$ policy satisfy in each frame $k\in\Z^{+}$:
\begin{equation} \label{eq:623}
\expect{\sum_{i\in A_{n,k}} W_{n,k}^{(i)}} = \expect{\int_{t_{k}}^{t_{k+1}} Q_{n}(t)\, \dd t} = \lambda_{n} \oW_{n} \expect{T_{k}},
\end{equation}
where we recall that $W_{n,k}^{(i)}$ represents only the queueing delay (not including service time), and $Q_{n}(t)$ denotes the number of class $n$ jobs waiting in the queue (not including that in the server) at time $t$.

Next we summarize useful properties of the performance region of average queueing delay vectors $(\oW_{n})_{n=1}^{N}$ in a nonpreemptive multi-class $M/G/1$ queue. For these results we refer readers to~\cite{FaG88a, Yao02lecture, BaG92book} for a detailed introduction. Define the value   $x_{n} \triangleq \rho_{n} \oW_{n}$ for each class $n\in\{1, \ldots, N\}$, and denote by $\Omega$ the performance region of the vector $(x_{n})_{n=1}^{N}$. The set $\Omega$ is a special polytope called \emph{(a base of) a polymatroid}~\cite{Wel76book}. An important property of the polymatroid $\Omega$ is: (1) Each vertex of $\Omega$ is the performance vector of a strict nonpreemptive priority rule; (2) Conversely, the performance vector of each strict nonpreemptive priority rule is a vertex of $\Omega$. In other words, there is a one-to-one mapping between vertices of $\Omega$ and the set of strict nonpreemptive priority rules. As a result, every feasible performance vector $(x_{n})_{n=1}^{N} \in \Omega$, or equivalently every feasible queueing delay vector $(\oW_{n})_{n=1}^{N}$, is attained by a randomization of strict nonpreemptive priority policies. For completeness, we formalize the last known result in the next lemma.

\begin{lem} \label{lem:2201}
In a nonpreemptive multi-class $M/G/1$ queue, define
\[
\mathcal{W} \triangleq \left\{ (\oW_{n})_{n=1}^{N} \mid (\rho_{n} \oW_{n})_{n=1}^{N} \in \Omega \right\}
\]
as the performance region~\cite{FaG88a} of average queueing delays. Then:
\begin{enumerate}
\item The performance vector $(\oW_{n})_{n=1}^{N}$ of each frame-based randomized policy $\pirand$ is in the delay region $\mathcal{W}$.
\item Conversely, every vector $(\oW_{n})_{n=1}^{N}$ in the delay region $\mathcal{W}$ is the performance vector of a $\pirand$ policy.
\end{enumerate}
\end{lem}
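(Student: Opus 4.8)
The plan is to prove both inclusions by reducing everything to a single convex-combination identity for the average delays, and then invoking the polytope structure of $\Omega$. Throughout I will write $\oW_{n}^{\pi}$ for the average delay of class $n$ under the deterministic policy that uses the fixed priority ordering $\pi$ in every frame, and $v^{\pi} \triangleq (\rho_{n} \oW_{n}^{\pi})_{n=1}^{N}$ for the corresponding point of $\Omega$. The one-to-one correspondence between strict nonpreemptive priority rules and vertices of $\Omega$ stated above guarantees that each $v^{\pi}$ is in fact a vertex.

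The key step I would establish first is that any $\pirand$ policy with ordering distribution $\{p_{\pi}\}$ attains $\oW_{n} = \sum_{\pi} p_{\pi} \oW_{n}^{\pi}$ for every class $n$. Because arrivals are Poisson the system renews at every frame boundary, so the frames are i.i.d. and the $\limsup$ in the definition of $\oW_{n}$ is in fact a genuine limit. Conditioning on the ordering drawn in a single frame $k$ (which, by the renewal structure, makes that frame distributed exactly as a frame under the pure policy $\pi$) and applying~\eqref{eq:623} to each pure policy gives $\expect{\sum_{i\in A_{n,k}} W_{n,k}^{(i)}} = \sum_{\pi} p_{\pi}\, \lambda_{n} \oW_{n}^{\pi} \expect{T_{k}}$, where I crucially use that $\expect{T_{k}}$ does not depend on the ordering, since the busy-period distribution is the same under every work-conserving policy. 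Dividing by $\expect{\abs{A_{n,k}}} = \lambda_{n} \expect{T_{k}}$ from~\eqref{eq:2101} cancels the common factor $\lambda_{n} \expect{T_{k}}$ and yields the claimed identity.

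Part (1) then follows immediately: multiplying the identity by $\rho_{n}$ gives $\rho_{n} \oW_{n} = \sum_{\pi} p_{\pi} v_{n}^{\pi}$, a convex combination of the vertices $\{v^{\pi}\}$ of $\Omega$. Since $\Omega$ is a polytope and hence convex, this point lies in $\Omega$, so $(\oW_{n})_{n=1}^{N} \in \mathcal{W}$ by the definition of $\mathcal{W}$.

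For part (2) I would argue in reverse. Take any $(\oW_{n})_{n=1}^{N} \in \mathcal{W}$, so that $(\rho_{n} \oW_{n})_{n=1}^{N} \in \Omega$. Because $\Omega$ is a polytope it equals the convex hull of its vertices, which are exactly the $\{v^{\pi}\}$; hence there exist weights $p_{\pi} \geq 0$ with $\sum_{\pi} p_{\pi} = 1$ such that $(\rho_{n} \oW_{n})_{n=1}^{N} = \sum_{\pi} p_{\pi} v^{\pi}$. Deploying the $\pirand$ policy that draws orderings according to $\{p_{\pi}\}$ and invoking the identity from the first step produces average delays $\rho_{n} \sum_{\pi} p_{\pi} \oW_{n}^{\pi} = \sum_{\pi} p_{\pi} v_{n}^{\pi} = \rho_{n} \oW_{n}$, i.e., exactly the target vector. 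The main obstacle, and the step deserving the most care, is the conditioning argument establishing the convex-combination identity; its validity hinges on the policy-independence of $\expect{T_{k}}$ (work-conservation) together with the i.i.d. frame structure, which is precisely what makes the ratio in the definition of $\oW_{n}$ collapse cleanly into a convex mixture of the pure-policy delays.
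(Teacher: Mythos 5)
Your proposal is correct and mirrors the paper's own proof: you establish the same convex-combination identity $\oW_{n}(\pirand)=\sum_{\pi} p_{\pi}\,\oW_{n}(\pi)$ by conditioning on the ordering drawn in a frame and using renewal reward theory together with the policy-independence of $\expect{T_{k}}$, exactly as the paper does via its equations~\eqref{eq:2210}--\eqref{eq:2212}, and then you deduce part (1) from convexity of $\Omega$ and part (2) from the vertex representation of the polytope, just as in Appendix~\ref{sec:2204}. No gaps; the argument is sound as written.
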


\begin{proof}[Proof of Lemma~\ref{lem:2201}]
 Given in Appendix~\ref{sec:2204}.
 \end{proof}
 
Optimizing a linear function over the polymatroidal region $\Omega$ will be useful. The solution is the following $c\mu$ rule:

 \begin{lem}[The $c\mu$ rule~\cite{Yao02lecture,BaG92book}] \label{lem:603}
In a nonpreemptive multi-class $M/G/1$ queue, define $x_{n} \triangleq \rho_{n} \oW_{n}$ and consider the linear program:
\begin{align}
\text{minimize:} &\quad  \sum_{n=1}^{N} c_{n}\, x_{n}  \label{eq:1301} \\
\text{subject to:} &\quad (x_{n})_{n=1}^{N} \in \Omega \label{eq:1302}
\end{align}
where $c_{n}$ are nonnegative constants. We assume $\sum_{n=1}^{N} \rho_{n} < 1$ for stability, and that second moments $\expect{X_{n}^{2}}$ of service times are finite for all classes $n$. The optimal solution to~\eqref{eq:1301}-\eqref{eq:1302} is a strict nonpreemptive priority policy that assigns priorities  in the decreasing order of $c_{n}$. That says, if  $c_{1} \geq c_{2} \geq \ldots \geq c_{N}$, then class $1$ gets the highest priority, class $2$ gets the second highest priority, and so on. In this case, the optimal average queueing delay $\oW_{n}^{*}$ of class $n$ is
\[
\oW_{n}^{*} = \frac{R}{(1-\sum_{k=0}^{n-1} \rho_{k})(1-\sum_{k=0}^{n} \rho_{k})},
\]
where $\rho_{0} \triangleq 0$ and $R\triangleq \frac{1}{2} \sum_{n=1}^{N} \lambda_{n} \expect{X_{n}^{2}}$.
\end{lem}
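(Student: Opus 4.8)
The plan is to exploit the polymatroid structure of $\Omega$ together with the linearity of the objective \eqref{eq:1301}. First I would note that minimizing a linear function over the bounded polytope $\Omega$ attains its optimum at a vertex of $\Omega$; by the stated one-to-one correspondence between vertices of $\Omega$ and strict nonpreemptive priority rules, it therefore suffices to search over the $N!$ priority orderings and identify the one that minimizes $\sum_{n=1}^{N} c_{n} x_{n}$. Equivalently, writing $\Omega$ as the base of a polymatroid with submodular rank function $b$, each priority ordering $\sigma$ produces the greedy vertex whose coordinates are $x_{\sigma(i)} = b(\{\sigma(1),\dots,\sigma(i)\}) - b(\{\sigma(1),\dots,\sigma(i-1)\})$, so I may work directly with these vertices.

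Next I would pin down the optimal ordering by an adjacent-interchange (exchange) argument. Fix a priority ordering and swap two classes $i,j$ that are adjacent in priority, with $i$ immediately above $j$. Since the aggregate higher-priority workload seen by the pair, and the workload seen by every lower-priority class, are unchanged by the swap, only $x_{i}$ and $x_{j}$ change, and their sum $x_{i}+x_{j}$ stays fixed; in polymatroid terms this is immediate because $x_{i}+x_{j}$ equals a difference of $b$-values over sets that do not depend on the internal order of the pair. Letting $\Delta x_{i}\ge 0$ be the increase in $x_{i}$ caused by demoting class $i$ below class $j$ (class $i$ now waits longer), the objective of the swapped ordering exceeds that of the original by $(c_{i}-c_{j})\,\Delta x_{i}$, which is nonnegative whenever $c_{i}\ge c_{j}$. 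Hence keeping the larger-weight class ahead never decreases optimality, and applying this repeatedly shows that ordering the classes by decreasing $c_{n}$ is optimal, which is exactly the claimed $c\mu$ rule.

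Finally, with the optimal ordering fixed (say $c_{1}\ge\dots\ge c_{N}$, so class $n$ receives the $n$th priority), I would supply the explicit delays. This is the classical Cobham mean-delay formula for the nonpreemptive priority $M/G/1$ queue, obtained by a mean-value/work-decomposition argument: a tagged class-$n$ arrival waits for the residual work $R$ plus the higher-priority work that delays it, which yields $\oW_{n}^{*} = R/[(1-\sum_{k=0}^{n-1}\rho_{k})(1-\sum_{k=0}^{n}\rho_{k})]$. As a consistency check, writing $\sigma_{n}\triangleq\sum_{k=1}^{n}\rho_{k}$ and using $\rho_{n}=\sigma_{n}-\sigma_{n-1}$, the coordinates $x_{n}=\rho_{n}\oW_{n}^{*}$ telescope to $\sum_{n=1}^{N} x_{n} = R\,\rho/(1-\rho)$, which is independent of the ordering and confirms the conservation law underlying both the polymatroid and the invariance used in the interchange step.

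I expect the main obstacle to be the bookkeeping in the interchange step, namely arguing rigorously that only the two swapped classes' delays change and that their weighted sum is invariant, rather than the final formula, which is standard. This invariance is precisely the work-conservation law and is cleanest to state through the submodular rank function $b$; once it is in hand, both the exchange argument and the delay computation are routine.
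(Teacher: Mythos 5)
Your proposal is correct, but note that the paper does not actually prove Lemma~\ref{lem:603}: it is stated as a known result and deferred entirely to the cited references (the $c\mu$ rule literature and the standard $M/G/1$ priority analysis), so there is no in-paper proof to compare against. What you have written is essentially the standard textbook derivation, and it is sound: linear objectives over the compact base polytope $\Omega$ are minimized at a vertex; the paper's stated bijection between vertices and strict priority orderings reduces the problem to the $N!$ orderings; the adjacent-interchange step is valid because, by Cobham's formula, the delay of any class depends only on the \emph{set} of higher-priority classes, so swapping an adjacent pair $(i,j)$ leaves all other coordinates fixed and preserves $x_{i}+x_{j}$ (equivalently, prefix sums of a greedy vertex equal $b$-values of prefix sets, which are order-insensitive within the pair); and the closing formula for $\oW_{n}^{*}$ is exactly Cobham's nonpreemptive priority formula, with your telescoping check $\sum_{n} x_{n} = R\rho/(1-\rho)$ correctly recovering the conservation law.

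One step you should make explicit rather than assert: the sign condition $\Delta x_{i}\geq 0$ when class $i$ is demoted below class $j$. The interchange conclusion $(c_{i}-c_{j})\Delta x_{i}\geq 0$ needs this sign, and ``class $i$ now waits longer'' deserves a one-line verification. It follows immediately from the formula you already invoke: with $\sigma$ the load strictly above the pair, demotion changes $\oW_{i}$ from $R/\left[(1-\sigma)(1-\sigma-\rho_{i})\right]$ to $R/\left[(1-\sigma-\rho_{j})(1-\sigma-\rho_{i}-\rho_{j})\right]$, and both factors in the denominator strictly decrease (while remaining positive under $\sum_{n}\rho_{n}<1$), so $\oW_{i}$ increases and $\Delta x_{i}=\rho_{i}\,\Delta\oW_{i}\geq 0$. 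With that line added, ties ($c_{i}=c_{j}$) are handled automatically since the swap then leaves the objective unchanged, and your bubble-sort iteration completes the argument. Indeed, once Cobham's formula is in hand the polymatroid language is optional window dressing: the interchange invariance and the sign of $\Delta x_{i}$ are both direct computations from the formula, which is arguably the cleanest way to write it up.
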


\section{Achieving Delay Constraints} \label{sec:1803}

The first problem we consider is to construct a frame-based policy that yields average delays satisfying $\oW_{n} \leq d_{n}$ for all classes $n \in \{1, \ldots, N\}$, where $d_{n}>0$ are given constants. We assume a fixed power allocation and that the delay constraints are feasible.

Our solution relies on tracking the running difference between past queueing delays for each class $n$ and the desired delay bound $d_{n}$. For each class $n\in\{1, \ldots, N\}$, we define a discrete-time \emph{virtual delay queue} $\{Z_{n,k}\}_{k=0}^{\infty}$ where $Z_{n,k+1}$ is updated at frame boundary $t_{k+1}$ following the equation
\begin{equation} \label{eq:601}
Z_{n,k+1} = \max\left[ Z_{n,k} + \sum_{i\in A_{n,k}} \left(W_{n,k}^{(i)} - d_{n}\right), \, 0\right].
\end{equation}
Assume $Z_{n,0} = 0$ for all $n$. In~\eqref{eq:601}, the delays $W_{n,k}^{(i)}$ and constant $d_{n}$ can viewed as arrivals and service of the queue $\{Z_{n,k}\}_{k=0}^{\infty}$, respectively. If this queue is stabilized, we know that the average arrival rate to the queue (being the per-frame average sum of class $n$ delays $\sum_{n\in A_{n,k}} W_{n,k}^{(i)}$) is less than or equal to the average service rate (being the value $d_{n}$ multiplied by the average number of class $n$ arrivals per frame), from which we infer $\oW_{n} \leq d_{n}$. This is formalized below.

\begin{defn}
We say queue $\{Z_{n,k}\}_{k=0}^{\infty}$ is mean rate stable if  $\lim_{K\to\infty} \expect{Z_{n,K}}/ K = 0$.
\end{defn}
\begin{lem} \label{lem:602}
If queue $\{Z_{n,k}\}_{k=0}^{\infty}$ is mean rate stable, then $\oW_{n} \leq d_{n}$.
\end{lem}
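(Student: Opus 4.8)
The plan is to exploit the standard virtual-queue telescoping argument driven by the update rule~\eqref{eq:601}. First I would discard the projection onto the nonnegative reals using the elementary bound $\max[a,0]\geq a$, which turns~\eqref{eq:601} into the one-step inequality
\begin{equation*}
Z_{n,k+1} \geq Z_{n,k} + \sum_{i\in A_{n,k}} W_{n,k}^{(i)} - d_{n}\abs{A_{n,k}}.
\end{equation*}
Summing this over $k=0,\dots,K-1$ telescopes the left side and, using the initial condition $Z_{n,0}=0$, yields
\begin{equation*}
Z_{n,K} \geq \sum_{k=0}^{K-1}\sum_{i\in A_{n,k}} W_{n,k}^{(i)} - d_{n}\sum_{k=0}^{K-1}\abs{A_{n,k}}.
\end{equation*}

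Next I would take expectations and rearrange to isolate the cumulative delay, obtaining
\begin{equation*}
\expect{\sum_{k=0}^{K-1}\sum_{i\in A_{n,k}} W_{n,k}^{(i)}} \leq \expect{Z_{n,K}} + d_{n}\,\expect{\sum_{k=0}^{K-1}\abs{A_{n,k}}}.
\end{equation*}
Dividing through by $\expect{\sum_{k=0}^{K-1}\abs{A_{n,k}}}$ reproduces exactly the ratio appearing in the definition~\eqref{eq:1103} of $\oW_n$, plus a residual term $\expect{Z_{n,K}}\big/\expect{\sum_{k=0}^{K-1}\abs{A_{n,k}}}$ that I must drive to zero. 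The claim $\oW_n\leq d_n$ would then follow by taking $\limsup_{K\to\infty}$ of both sides, provided this residual vanishes.

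The crux of the argument is controlling that residual, and here I would invoke the i.i.d.\ frame structure established in Section~\ref{sec:1104}: equation~\eqref{eq:2101} gives $\expect{\abs{A_{n,k}}}=\lambda_n\expect{T_k}$ with $\expect{T_k}$ a fixed positive constant, so $\expect{\sum_{k=0}^{K-1}\abs{A_{n,k}}}=K\lambda_n\expect{T_k}$ grows linearly in $K$. The residual thus equals $\expect{Z_{n,K}}\big/(K\lambda_n\expect{T_k})$, and the mean rate stability hypothesis $\lim_{K\to\infty}\expect{Z_{n,K}}/K=0$ forces it to zero. This is the step I expect to carry the real content: the telescoping is mechanical, but the conclusion hinges on the denominator growing at the faster rate $\Theta(K)$ so that a merely \emph{sublinear} bound on $\expect{Z_{n,K}}$ suffices. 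I would finish by noting that, because the residual converges to a genuine limit rather than only a limit superior, the $\limsup$ distributes over the right-hand side to give $0+d_n$, establishing $\oW_n\leq d_n$.
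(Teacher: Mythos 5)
Your proposal is correct and follows essentially the same telescoping argument as the paper's own proof: drop the $\max[\cdot,0]$, telescope with $Z_{n,0}=0$, take expectations, divide by $\expect{\sum_{k=0}^{K-1}\abs{A_{n,k}}}$, and kill the residual $\expect{Z_{n,K}}\big/\expect{\sum_{k=0}^{K-1}\abs{A_{n,k}}}$ via mean rate stability. The only difference is at the residual step: you invoke the exact equality $\expect{\sum_{k=0}^{K-1}\abs{A_{n,k}}}=K\lambda_{n}\expect{T_{k}}$ with $\expect{T_{k}}$ a fixed constant (valid in the fixed-power setting of Section~\ref{sec:1803}, where the frame-size distribution is identical for all work-conserving policies), whereas the paper uses only the lower bound $\expect{\abs{A_{n,k}}}=\lambda_{n}\expect{T_{k}}\geq\lambda_{n}\expect{I_{0}}$ --- a detail that matters because Lemma~\ref{lem:602} is reused under dynamic power control (Theorem~\ref{thm:603}), where $\expect{T_{k}(P_{k})}$ varies with $k$ and your equality would need to be replaced by that idle-period bound.
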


\begin{proof}[Proof of Lemma~\ref{lem:602}]
From~\eqref{eq:601} we get
\[
Z_{n,k+1} \geq Z_{n,k} - d_{n} \abs{A_{n,k}} + \sum_{i\in A_{n,k}} W_{n,k}^{(i)}.
\]
Summing the above over $k\in\{0, \ldots, K-1\}$, using $Z_{n,0}=0$, and taking expectation yields
\[
\expect{Z_{n,K}} \geq - d_{n} \expect{\sum_{k=0}^{K-1} \abs{A_{n,k}}} + \expect{\sum_{k=0}^{K-1} \sum_{i\in A_{n,k}} W_{n,k}^{(i)}}.
\]
Dividing the above by $\expect{\sum_{k=0}^{K-1} \abs{A_{n,k}}}$ yields
\[
\frac{\expect{Z_{n,K}}}{\expect{\sum_{k=0}^{K-1} \abs{A_{n,k}}}} \geq \frac{ \expect{\sum_{k=0}^{K-1} \sum_{i\in A_{n,k}} W_{n,k}^{(i)}}}{\expect{\sum_{k=0}^{K-1} \abs{A_{n,k}}}} - d_{n}.
\]
Taking a $\limsup$ as $K\to\infty$ and using~\eqref{eq:1103} yields
\[
\begin{split}
\oW_{n} \leq d_{n} + \limsup_{K\to\infty} \frac{\expect{Z_{n,K}}}{K} \frac{K}{\expect{\sum_{k=0}^{K-1} \abs{A_{n,k}}}}.
\end{split}
\]
Using $\expect{\abs{A_{n,k}}} = \lambda_{n} \expect{T_{k}} \geq \lambda_{n} \expect{I_{k}} = \lambda_{n} \expect{I_{0}}$, we get
\[
\oW_{n} \leq d_{n} + \frac{1}{\lambda_{n} \expect{I_{0}}} \lim_{K\to\infty} \frac{\expect{Z_{n,K}}}{K} = d_{n}
\]
by mean rate stability of $Z_{n,k}$.
\end{proof}

\subsection{Delay Feasible Policy} \label{sec:701}
The following policy stabilizes every $\{Z_{n,k}\}_{k=0}^{\infty}$ queue in the mean rate stable sense and thus achieves $\oW_{n}\leq d_{n}$ for all classes $n$.

\underline{\textit {Delay Feasible ($\mathsf{DelayFeas}$) Policy:}}

\begin{itemize}
\item In every frame $k\in\Z^{+}$, update $Z_{n,k}$ by~\eqref{eq:601} and serve jobs using nonpreemptive strict priorities assigned in the decreasing order of $Z_{n,k}$; ties are broken arbitrarily.
\end{itemize}
We note that the $\mathsf{DelayFeas}$ policy does not require any statistical knowledge of job arrivals and service times. Intuitively, each $Z_{n,k}$ queue tracks the amount of past queueing delays in class $n$ exceeding the desired delay bound $d_{n}$ (see~\eqref{eq:601}), and the $\mathsf{DelayFeas}$ policy gives priorities to classes that more severely violate their delay constraints.

\subsection{Motivation of the $\mathsf{DelayFeas}$ Policy} \label{sec:2202}
The structure of the $\mathsf{DelayFeas}$ policy follows a Lyapunov drift argument. Define vector $\bZ_{k} \triangleq (Z_{n,k})_{n=1}^{N}$. For some finite constants $\theta_{n} >0$ for all classes $n$,  we define the \emph{quadratic Lyapunov function}
\[
L(\bZ_{k}) \triangleq \frac{1}{2} \sum_{n=1}^{N} \theta_{n} Z_{n,k}^{2}
\]
as a weighted scalar measure of queue sizes $(Z_{n,k})_{n=1}^{N}$. Define the \emph{one-frame Lyapunov drift}
\[
\Delta(\bZ_{k}) \triangleq \expect{L(\bZ_{k+1}) - L(\bZ_{k}) \mid \bZ_{k}}
\]
as the conditional expected difference of $L(\bZ_{k})$ over a frame. Taking square of~\eqref{eq:601} and using $(\max[a, 0])^{2} \leq a^{2}$ yields
\begin{equation} \label{eq:1602}
Z_{n,k+1}^{2} \leq \left[Z_{n,k} + \sum_{i\in A_{n,k}} \left(W_{n,k}^{(i)} - d_{n}\right)\right]^{2}.
\end{equation}
Multiplying~\eqref{eq:1602} by $\theta_{n}/2$, summing over $n\in\{1, \ldots, N\}$, and taking conditional expectation on $\bZ_{k}$, we get
\begin{equation} \label{eq:104}
\begin{split}
\Delta(\bZ_{k})
			&\leq \frac{1}{2} \sum_{n=1}^{N} \theta_{n}\, \expect{ \left( \sum_{i\in A_{n,k}} \left(W_{n,k}^{(i)} - d_{n}\right) \right)^{2} \mid \bZ_{k}}  \\
			&\quad + \sum_{n=1}^{N} \theta_{n} \, Z_{n,k}\, \expect{\sum_{i\in A_{n,k}} \paren{W_{n,k}^{(i)}-d_{n}} \mid \bZ_{k}}.
\end{split}
\end{equation}
Lemma~\ref{lem:608} in Appendix~\ref{appendix:701} shows that the second term of~\eqref{eq:104} is bounded by a finite constant $C>0$. It leads to the following Lyapunov drift inequality:
\begin{equation} \label{eq:303}
\Delta(\bZ_{k}) \leq C + \sum_{n=1}^{N} \theta_{n}\, Z_{n,k} \, \expect{\sum_{i\in A_{n,k}} \paren{W_{n,k}^{(i)}-d_{n}} \mid \bZ_{k}}.
\end{equation}

Over all frame-based policies, we are interested in the one that, in each frame $k$ after observing $\bZ_{k}$, minimizes the right side of~\eqref{eq:303}.  Recall that our policy on frame $k$ chooses which nonpreemptive priorities to use during the frame. To show that this is exactly the $\mathsf{DelayFeas}$ policy, we simplify~\eqref{eq:303}. Under a frame-based policy, we have by renewal reward theory
\[
\expect{ \sum_{i\in A_{n,k}} W_{n,k}^{(i)} \mid \bZ_{k}} = \lambda_{n} \oW_{n,k}\, \expect{T_{k}},
\]
where $\oW_{n,k}$ denotes the long-term average delay of class $n$ if the control in frame $k$ is repeated in every frame. Together with $\expect{\abs{A_{n,k}}} = \lambda_{n} \expect{T_{k}}$, inequality~\eqref{eq:303} is re-written as
\begin{equation} \label{eq:1801}
\begin{split}
\Delta(\bZ_{k}) &\leq \left(C - \expect{T_{k}} \sum_{n=1}^{N} \theta_{n} \, Z_{n,k} \lambda_{n} d_{n} \right) \\
&\quad + \expect{T_{k}} \sum_{n=1}^{N} \theta_{n}\, Z_{n,k} \, \lambda_{n} \, \oW_{n,k}.
\end{split}
\end{equation}
Because in this section we do not have dynamic power allocation (so that power is fixed to the same value in every busy period), the value $\expect{T_{k}}$ is the same for all job scheduling policies. Then our desired policy, in every frame $k$, chooses a job scheduling to minimize the metric
$
\sum_{n=1}^{N} \theta_{n}\, Z_{n,k}\, \lambda_{n} \oW_{n,k}
$
over all feasible delay vectors $(\oW_{n,k})_{n=1}^{N}$. If we choose $\theta_{n} = \expect{X_{n}}$ for all classes $n$,\footnote{We note that the mean service time $\expect{X_{n}}$ as a value of $\theta_{n}$ is only needed in the arguments  constructing the $\mathsf{DelayFeas}$ policy. The $\mathsf{DelayFeas}$ policy itself does not need the knowledge of  $\expect{X_{n}}$.} the desired policy minimizes $\sum_{n=1}^{N} Z_{n,k}\, \lambda_{n}\, \expect{X_{n}} \oW_{n,k}$ in every frame $k$. From lemma~\ref{lem:603}, this  is achieved by the priority service rule defined by the $\mathsf{DelayFeas}$ policy.

\subsection{Performance of the $\mathsf{DelayFeas}$ Policy} \label{sec:1802}
\begin{thm} \label{thm:601}
For every collection of feasible delay bounds $\{d_{1}, \ldots, d_{N}\}$, the $\mathsf{DelayFeas}$ policy yields average delays satisfying $\oW_{n} \leq d_{n}$ for all classes $n\in\{1, \ldots, N\}$.
\end{thm}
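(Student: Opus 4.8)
The plan is to reduce the claim to the mean rate stability of the virtual delay queues and then establish that stability through the Lyapunov drift bound already derived in Section~\ref{sec:2202}. By Lemma~\ref{lem:602}, it suffices to show that under the $\mathsf{DelayFeas}$ policy every queue $\{Z_{n,k}\}_{k=0}^{\infty}$ satisfies $\lim_{K\to\infty}\expect{Z_{n,K}}/K = 0$. Thus the entire proof centers on controlling the growth rate of $\expect{Z_{n,K}}$.

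First I would invoke the drift inequality~\eqref{eq:1801}, namely $\Delta(\bZ_k) \leq C + \expect{T_k}\sum_{n=1}^{N}\theta_n Z_{n,k}\lambda_n(\oW_{n,k} - d_n)$, and recall from Section~\ref{sec:2202} that the $\mathsf{DelayFeas}$ priority rule is precisely the scheduling decision that minimizes the right side over all frame-based policies (via Lemma~\ref{lem:603}, taking $\theta_n = \expect{X_n}$). The key comparison step uses feasibility of the delay bounds: since $\{d_1,\dots,d_N\}$ is feasible, there is a delay vector with $\oW_n \leq d_n$ for all $n$ lying in the region $\mathcal{W}$, and by part~(2) of Lemma~\ref{lem:2201} this vector is attained by some randomized policy $\pirand$. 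Evaluating the drift bound at this $\pirand$ makes each term $\theta_n Z_{n,k}\lambda_n(\oW_{n,k} - d_n)$ nonpositive, because $Z_{n,k}\geq 0$, $\theta_n>0$, $\lambda_n>0$, and the per-frame average delay of $\pirand$ equals its long-run value $\oW_n\leq d_n$. Since $\expect{T_k}$ is identical across all job-scheduling policies (power is fixed here) and $\mathsf{DelayFeas}$ minimizes the right side, its drift is no larger than the drift of $\pirand$, giving the clean bound $\Delta(\bZ_k)\leq C$ for every $k$.

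With a uniformly bounded drift, I would take total expectations, telescope over $k\in\{0,\dots,K-1\}$, and use $L(\bZ_0)=0$ (since $Z_{n,0}=0$ for all $n$) to obtain $\expect{L(\bZ_K)}\leq CK$, i.e.\ $\tfrac{1}{2}\sum_n \theta_n\expect{Z_{n,K}^2}\leq CK$. This yields $\expect{Z_{n,K}^2}\leq 2CK/\theta_n$, and by Jensen's inequality $\expect{Z_{n,K}}\leq\sqrt{2CK/\theta_n}$, so $\expect{Z_{n,K}}/K \leq \sqrt{2C/(\theta_n K)}\to 0$. This is exactly mean rate stability, and Lemma~\ref{lem:602} then delivers $\oW_n\leq d_n$ for every class.

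The step I expect to be the main obstacle is the comparison argument rather than the telescoping: one must ensure that the drift-minimizing property of $\mathsf{DelayFeas}$ is asserted over a policy class rich enough to include the frame-based randomized policy $\pirand$, and that $\pirand$'s conditional per-frame quantity $\oW_{n,k}$ genuinely coincides with its long-run average $\oW_n\leq d_n$ under the renewal structure of Section~\ref{sec:1104} (this is what~\eqref{eq:623} supplies). Verifying that the constant $C$ is finite and independent of $k$ (established in Lemma~\ref{lem:608}) and that the sign of the comparison term is correct completes the argument; everything downstream is routine.
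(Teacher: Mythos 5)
Your proposal is correct and follows essentially the same route as the paper's own proof: reduction to mean rate stability via Lemma~\ref{lem:602}, the comparison of the drift bound~\eqref{eq:1801} against the randomized policy $\pirand^{*}$ guaranteed by Lemma~\ref{lem:2201} to obtain $\Delta(\bZ_{k})\leq C$, and then telescoping with $\expect{Z_{n,K}}\leq\sqrt{\expect{Z_{n,K}^{2}}}\leq\sqrt{2KC/\theta_{n}}$. Your closing remarks on the comparison step (the policy class containing $\pirand^{*}$, the identification of $\oW_{n,k}$ with the long-run average via~\eqref{eq:623}, and the finiteness of $C$ from Lemma~\ref{lem:608}) match exactly the ingredients the paper relies on.
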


\begin{proof}[Proof of Theorem~\ref{thm:601}]
It suffices to show that the $\mathsf{DelayFeas}$ policy yields mean rate stability for all $Z_{n,k}$ queues by Lemma~\ref{lem:602}. By Lemma~\ref{lem:2201}, there exists a randomized priority policy $\pirand^{*}$ (introduced in Section~\ref{sec:1104}) that yields average delays $\oW_{n}^{*}$ satisfying $\oW_{n}^{*} \leq d_{n}$ for all classes $n$. Since the $\mathsf{DelayFeas}$ policy minimizes the last term of~\eqref{eq:1801} in each frame (under $\theta_{n} = \expect{X_{n}}$ for all $n$),  comparing the $\mathsf{DelayFeas}$ policy with the $\pirand^{*}$ policy yields, in every frame $k$,
\[
\sum_{n=1}^{N} \theta_{n}\, Z_{n,k} \,\lambda_{n} \oW_{n,k}^{\mathsf{DelayFeas}}  \leq  \sum_{n=1}^{N} \theta_{n} \, Z_{n,k} \,\lambda_{n} \oW_{n}^{*}.
\]
It follows that~\eqref{eq:1801} under the $\mathsf{DelayFeas}$ policy is further upper bounded by
\[
\begin{split}
\Delta(\bZ_{k}) &\leq C +  \expect{T_{k}} \sum_{n=1}^{N} \theta_{n}\, Z_{n,k}\, \lambda_{n} (\oW_{n,k}^{\mathsf{DelayFeas}} - d_{n})  \\
&\leq C +  \expect{T_{k}} \sum_{n=1}^{N} \theta_{n}\, Z_{n,k}\, \lambda_{n} (\oW_{n}^{*} - d_{n}) \leq C.
\end{split}
\]
Taking expectation, summing over $k\in\{0, \ldots, K-1\}$, and noting $L(\bZ_{0}) = 0$, we get
\[
\expect{L(\bZ_{K})} = \frac{1}{2} \sum_{n=1}^{N} \theta_{n} \expect{Z_{n,K}^{2}} \leq KC.
\]
It follows that
$
\expect{Z_{n,K}^{2}} \leq 2KC/\theta_{n}
$
for all classes $n$. Since $Z_{n,K}\geq 0$, we get
\[
0\leq \expect{Z_{n,K}} \leq \sqrt{\expect{Z_{n,K}^{2}}} \leq \sqrt{2KC/\theta_{n}}.
\]
Dividing the above by $K$ and passing $K\to\infty$ yields
\[
\lim_{K\to\infty} \frac{\expect{Z_{n,K}}}{K} = 0, \quad \forall n\in\{1, \ldots, N\},
\]
and all $Z_{n,k}$ queues are mean rate stable.
\end{proof}

\section{Minimizing Delay Penalty Functions}  \label{sec:1804}

Generalizing the first delay feasibility problem, next we optimize a separable penalty function of average delays. For each class $n$, let $f_{n}(\cdot)$ be a nondecreasing, nonnegative, continuous, and convex function of average delay $\oW_{n}$. Consider the constrained penalty minimization problem
\begin{align}
\text{minimize:} 	&\quad \sum_{n=1}^{N} f_{n}(\oW_{n})  \label{eq:613} \\
\text{subject to:}	&\quad \oW_{n} \leq d_{n}, \quad \forall n\in\{1, \ldots, N\}. \label{eq:617}
\end{align}
We assume that a constant power is allocated in all frames, and that constraints~\eqref{eq:617} are feasible. The goal is to construct a frame-based policy that solves~\eqref{eq:613}-\eqref{eq:617}. Let $(\oW_{n}^{*})_{n=1}^{N}$ be the optimal solution to~\eqref{eq:613}-\eqref{eq:617}, attained by a randomized priority policy $\pirand^{*}$ (by Lemma~\ref{lem:2201}).

\subsection{Delay Proportional Fairness} \label{sec:703}

One interesting penalty function is the one that attains \emph{proportional fairness}. We say a delay vector $(\oW_{n}^{*})_{n=1}^{N}$ is \emph{delay proportional fair} if it is optimal under the quadratic penalty function $f_{n}(\oW_{n}) = \frac{1}{2} c_{n} \oW_{n}^{2}$ for each class $n$, where $c_{n}>0$ are given constants. The intuition is two-fold. First, under the quadratic penalty functions, any feasible delay vector $(\oW_{n})_{n=1}^{N}$ necessarily satisfies
\begin{equation} \label{eq:621}
\sum_{n=1}^{N} f_{n}'(\oW_{n}^{*}) (\oW_{n}-\oW_{n}^{*}) = \sum_{n=1} c_{n} (\oW_{n} - \oW_{n}^{*}) \oW_{n}^{*} \geq 0,
\end{equation}
which is analogous to the \emph{rate proportional fair}~\cite{Kel97} criterion
\begin{equation} \label{eq:620}
\sum_{n=1}^{N} c_{n} \frac{x_{n} - x_{n}^{*}}{x_{n}^{*}} \leq 0,
\end{equation}
where $(x_{n})_{n=1}^{N}$ is any feasible rate vector and $(x_{n}^{*})_{n=1}^{N}$ is the optimal rate vector. Second,  rate proportional fairness, when deviating from the optimal solution, yields the aggregate change of proportional rates less than or equal to zero (see~\eqref{eq:620}); it penalizes large rates to increase. When delay proportional fairness deviates from the optimal solution, the aggregate change of proportional delays is always nonnegative (see~\eqref{eq:621}); small delays are penalized for trying to improve.

\subsection{Delay Fairness Policy}
In addition to having the $\{Z_{n,k}\}_{k=0}^{\infty}$ queues updated by~\eqref{eq:601} for all classes $n$, we setup new discrete-time virtual queues $\{Y_{n,k}\}_{k=0}^{\infty}$ for all classes $n$, where $Y_{n,k+1}$ is updated at frame boundary $t_{k+1}$ as:
\begin{equation} \label{eq:622}
Y_{n,k+1} = \max\left[ Y_{n,k} + \sum_{i\in A_{n,k}} \left(W_{n,k}^{(i)} - r_{n,k} \right), 0\right],
\end{equation}
where $r_{n,k} \in[0, d_{n}]$ are auxiliary variables chosen at time $t_{k}$ independent of frame size $T_{k}$ and the number $\abs{A_{n,k}}$ of class $n$ arrivals in frame $k$. Assume $Y_{n,0} = 0$ for all $n$. Whereas the $Z_{n,k}$ queues are useful to enforce delay constraints $\oW_{n}\leq d_{n}$ (as seen in Section~\ref{sec:1803}), the $Y_{n,k}$ queues are useful to achieve the optimal delay vector $(\oW_{n}^{*})_{n=1}^{N}$.

\underline{\textit{Delay Fairness ($\mathsf{DelayFair}$) Policy:}}

\begin{enumerate}
\item In the $k$th frame for each $k\in\Z^{+}$, after observing $\bZ_{k}$ and $\bY_{k}$, use  nonpreemptive strict priorities assigned in the decreasing order of $(Z_{n,k} + Y_{n,k})/\expect{S_{n}}$, where $\expect{S_{n}}$ is the mean size of a class $n$ job. Ties are broken arbitrarily. 
\item At the end of the $k$th frame, compute $Z_{n,k+1}$ and $Y_{n,k+1}$ for all classes $n$ by~\eqref{eq:601} and \eqref{eq:622}, respectively, where $r_{n,k}$ is the solution to the convex program:
\begin{align*}
\text{minimize:}		&\quad V f_{n}(r_{n,k}) - Y_{n,k}\, \lambda_{n} \, r_{n,k} \\
\text{subject to:}	&\quad 0\leq r_{n,k}\leq d_{n},
\end{align*}
where $V>0$ is a predefined control parameter.
\end{enumerate}
While the $\mathsf{DelayFeas}$ policy in Section~\ref{sec:1803} does not require any statistical knowledge of arrivals and service times, the $\mathsf{DelayFair}$ policy needs the mean but not higher moments of arrivals and service times for all classes $n$.

In the example of delay proportional fairness with quadratic penalty functions $f_{n}(\oW_{n}) = \frac{1}{2} c_{n} \oW_{n}^{2}$ for all classes $n$, the second step of the $\mathsf{DelayFair}$ policy solves:
\begin{align*}
\text{minimize:}		&\quad \left(\frac{1}{2} V \,c_{n} \right) r_{n,k}^{2} - Y_{n,k} \, \lambda_{n} \, r_{n,k}\\
\text{subject to:}	&\quad 0\leq r_{n,k} \leq d_{n}.
\end{align*}
The solution is $r_{n,k}^{*} = \min \left[d_{n}, \frac{Y_{n,k} \lambda_{n}}{V\, c_{n}} \right]$.

\subsection{Motivation of the $\mathsf{DelayFair}$ Policy}

The $\mathsf{DelayFair}$ policy follows a Lyapunov drift argument similar to that in Section~\ref{sec:1803}. Define $\bZ_{k} \triangleq (Z_{n,k})_{n=1}^{N}$ and $\bY_{k} \triangleq (Y_{n,k})_{n=1}^{N}$. Define the Lyapunov function $L(\bZ_{k}, \bY_{k}) \triangleq \frac{1}{2} \sum_{n=1}^{N} (Z_{n,k}^{2} + Y_{n,k}^{2})$ and the one-frame Lyapunov drift
\[
\Delta(\bZ_{k}, \bY_{k}) \triangleq \expect{L(\bZ_{k+1}, \bY_{k+1}) - L(\bZ_{k}, \bY_{k})\mid \bZ_{k}, \bY_{k}}.
\]
Taking square of~\eqref{eq:622} yields
\begin{equation} \label{eq:1603}
Y_{n,k+1}^{2} \leq \left[ Y_{n,k} + \sum_{i\in A_{n,k}} \left(W_{n,k}^{(i)} - r_{n,k} \right) \right]^{2}.
\end{equation}
Summing~\eqref{eq:1602} and~\eqref{eq:1603} over all classes $n\in\{1, \ldots, N\}$, dividing the result by $2$, and taking conditional expectation on $\bZ_{k}$ and $\bY_{k}$, we get
\begin{equation} \label{eq:616}
\begin{split}
&\Delta(\bZ_{k}, \bY_{k}) 
		\leq C - \sum_{n=1}^{N} Z_{n,k}\, d_{n}\, \expect{\abs{A_{n,k}} \mid \bZ_{k}, \bY_{k} } \\
	&\quad	- \sum_{n=1}^{N}  Y_{n,k}\, \expect{r_{n,k} \abs{A_{n,k}} \mid \bZ_{k}, \bY_{k} } \\
	&\quad	+ \sum_{n=1}^{N} (Z_{n,k}+Y_{n,k})\, \expect{  \sum_{i\in A_{n,k}} W_{n,k}^{(i)} \mid \bZ_{k}, \bY_{k} },
\end{split}
\end{equation}
where $C>0$ is a finite constant, different from that used in Section~\ref{sec:2202}, upper bounding the sum of all $(\bZ_{k}, \bY_{k})$-independent terms. This constant exists using arguments similar to those in Lemma~\ref{lem:608} of Appendix~\ref{appendix:701}.

Adding to both sides of~\eqref{eq:616} the weighted penalty term
$
V \sum_{n=1}^{N} \expect{ f_{n}(r_{n,k})\, T_{k}\mid \bZ_{k}, \bY_{k}}
$,
where $V>0$ is a predefined control parameter, and evaluating the result under a frame-based policy (similar as the analysis in Section~\ref{sec:1802}), we get the following Lyapunov \emph{drift plus penalty} inequality:
\begin{equation} \label{eq:614} 
\begin{split}
&		\Delta(\bZ_{k}, \bY_{k}) + V \sum_{n=1}^{N} \expect{ f_{n}(r_{n,k})\, T_{k}\mid \bZ_{k}, \bY_{k}} \\
& \leq \left( C - \expect{T_{k}} \sum_{n=1}^{N} Z_{n,k} \, \lambda_{n} \, d_{n} \right) \\
& + \expect{T_{k}} \sum_{n=1}^{N} \expect{ V \, f_{n}(r_{n,k}) -  Y_{n,k}\,  \lambda_{n} \, r_{n,k}\mid \bZ_{k}, \bY_{k}} \\
&+ \expect{T_{k}} \sum_{n=1}^{N} (Z_{n,k}+Y_{n,k}) \lambda_{n} \oW_{n,k}.
\end{split}
\end{equation}
We are interested in minimizing the right side of~\eqref{eq:614} in every frame $k$ over all frame-based policies and (possibly random) choices of $r_{n,k}$. Recall that in this section a constant power is allocated in all frames so that the value $\expect{T_{k}}$ is fixed under any work-conserving policy. The first and second step of the $\mathsf{DelayFair}$ policy minimizes the last (by Lemma~\ref{lem:603}) and the second-to-last  term of~\eqref{eq:614}, respectively.

\subsection{Performance of the $\mathsf{DelayFair}$ Policy} \label{sec:1101}

\begin{thm} \label{thm:602}
Given any feasible delay bounds $\{d_{1}, \ldots, d_{N}\}$,  the $\mathsf{DelayFair}$ policy yields average delays satisfying $\oW_{n} \leq d_{n}$ for all classes $n\in\{1, \ldots, N\}$, and attains average delay penalty satisfying
\[
\begin{split}
& \limsup_{K\to\infty}\sum_{n=1}^{N} f_{n}\left( \frac{\expect{\sum_{k=0}^{K-1} \sum_{i\in A_{n,k}} W_{n,k}^{(i)}}}{\expect{\sum_{k=0}^{K-1} \abs{A_{n,k}}}}\right)  \\
&\quad  \leq \frac{C \sum_{n=1}^{N} \lambda_{n}}{V} + \sum_{n=1}^{N} f_{n}(\oW_{n}^{*}),
\end{split}
\]
where $V>0$ is a predefined control parameter and $C>0$ a finite constant. By choosing $V$ sufficiently large, we attain arbitrarily close to the optimal delay penalty $ \sum_{n=1}^{N} f_{n}(\oW_{n}^{*})$.
\end{thm}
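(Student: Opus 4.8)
The plan is to run the standard Lyapunov drift-plus-penalty argument, taking inequality~\eqref{eq:614} as the starting point together with the fact (already established in the motivation) that the $\mathsf{DelayFair}$ policy minimizes the right-hand side of~\eqref{eq:614} in every frame over all frame-based policies and all admissible auxiliary choices $r_{n,k}$. The crucial comparison object is the optimal randomized priority rule $\pirand^{*}$ from Lemma~\ref{lem:2201} (which attains $\oW_{n}=\oW_{n}^{*}$), paired with the \emph{constant} auxiliary choice $r_{n,k}=\oW_{n}^{*}$. This choice is admissible because $0\le \oW_{n}^{*}\le d_{n}$ and a deterministic constant is trivially independent of $T_{k}$ and $\abs{A_{n,k}}$. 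Two separate conclusions must be drawn: the feasibility claim $\oW_{n}\le d_{n}$ and the near-optimal penalty bound.

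First I would substitute this comparison policy into the right-hand side of~\eqref{eq:614}: setting $\oW_{n,k}=\oW_{n}^{*}$ in the last term and $r_{n,k}=\oW_{n}^{*}$ in the second-to-last term, the $Y_{n,k}\lambda_{n}\oW_{n}^{*}$ contributions cancel and the surviving $Z_{n,k}$ terms collapse to $\expect{T_{k}}\sum_{n}Z_{n,k}\lambda_{n}(\oW_{n}^{*}-d_{n})\le 0$ since $\oW_{n}^{*}\le d_{n}$ and $Z_{n,k}\ge 0$. Because $\mathsf{DelayFair}$ minimizes the right-hand side, this yields
\[
\Delta(\bZ_{k}, \bY_{k}) + V\sum_{n=1}^{N} \expect{f_{n}(r_{n,k})\, T_{k}\mid \bZ_{k}, \bY_{k}} \le C + V\expect{T_{k}}\sum_{n=1}^{N} f_{n}(\oW_{n}^{*}).
\]
Taking expectations, summing over $k\in\{0,\dots,K-1\}$, telescoping the drift, and using $L(\bZ_{0},\bY_{0})=0$ together with the fact that $\expect{T_{k}}$ is a frame-independent constant produces a single master inequality bounding $\expect{L(\bZ_{K},\bY_{K})}$ plus the accumulated penalty by a quantity of order $K$.

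For the feasibility half I would discard the nonnegative penalty term to get $\expect{L(\bZ_{K},\bY_{K})}=O(K)$, hence $\expect{Z_{n,K}^{2}},\expect{Y_{n,K}^{2}}=O(K)$, and therefore $\expect{Z_{n,K}}/K,\expect{Y_{n,K}}/K\to 0$ via $\expect{Z_{n,K}}\le\sqrt{\expect{Z_{n,K}^{2}}}$. Mean rate stability of the $Z_{n,k}$ queues then gives $\oW_{n}\le d_{n}$ through Lemma~\ref{lem:602}, exactly as in the proof of Theorem~\ref{thm:601}.

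The near-optimality half is where the real work lies and is the step I expect to be the main obstacle: the drift bound only controls the penalty of the \emph{auxiliary} variables $\sum_{n}f_{n}(r_{n,k})$, whereas the theorem concerns the penalty of the \emph{actual} time-average delays, so the $Y_{n,k}$ queues must serve as the bridge. Dropping the drift term, dividing the master inequality by $KV\expect{T_{k}}$, and using independence of $r_{n,k}$ from $T_{k}$ to replace $\expect{f_{n}(r_{n,k})T_{k}}$ by $\expect{T_{k}}\expect{f_{n}(r_{n,k})}$ gives $\frac{1}{K}\sum_{k=0}^{K-1}\sum_{n}\expect{f_{n}(r_{n,k})}\le \frac{C}{V\expect{T_{k}}}+\sum_{n}f_{n}(\oW_{n}^{*})$; since $1/\expect{T_{k}}=(1-\sum_{n}\rho_{n})\sum_{n}\lambda_{n}\le\sum_{n}\lambda_{n}$ by~\eqref{eq:2102}, the constant matches the stated $C\sum_{n}\lambda_{n}/V$. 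Separately, telescoping the recursion~\eqref{eq:622} and using independence of $r_{n,k}$ from $\abs{A_{n,k}}$ shows the finite-horizon average delay of class $n$ is at most $\frac{1}{K}\sum_{k=0}^{K-1}\expect{r_{n,k}}$ plus an error of order $\expect{Y_{n,K}}/K\to 0$. I would then apply Jensen's inequality (convexity of $f_{n}$) to pull $f_{n}$ inside the frame-average of $\expect{r_{n,k}}$, invoke monotonicity and continuity of $f_{n}$ together with the boundedness $r_{n,k}\in[0,d_{n}]$ to absorb the vanishing $Y$-error through the nonlinearity, and finally take $\limsup_{K\to\infty}$ to obtain the claimed bound. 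The delicate point throughout is passing the two finite-horizon error terms ($\expect{Z_{n,K}}/K$ in the feasibility step and $\expect{Y_{n,K}}/K$ inside the penalty step) correctly through $f_{n}$ before taking limits.
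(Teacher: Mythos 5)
Your proposal is correct and follows essentially the same route as the paper's proof: the same comparison against $\pirand^{*}$ with the genie choice $r_{n,k}=\oW_{n}^{*}$ leading to~\eqref{eq:1005}, the same telescoping/mean-rate-stability argument for $\oW_{n}\leq d_{n}$ via Lemma~\ref{lem:602}, and the same Jensen step (your factoring $\expect{f_{n}(r_{n,k})T_{k}}=\expect{f_{n}(r_{n,k})}\expect{T_{k}}$ by independence coincides with the paper's use of~\eqref{eq:1106} since $T_{k}$ is i.i.d.\ here). Your bridging step from the auxiliary averages $\frac{1}{K}\sum_{k}\expect{r_{n,k}}$ to the actual delay averages via telescoping~\eqref{eq:622} and absorbing the vanishing $\expect{Y_{n,K}}/K$ error through continuity of $f_{n}$ is exactly the content of the paper's Lemma~\ref{lem:1801} (Appendix~\ref{sec:2203}), which handles the same limit passage via a subsequence argument.
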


We remark that the tradeoff of choosing a large $V$ value is the amount of time required for   virtual queues $\{Z_{n,k}\}_{k=0}^{\infty}$ and $\{Y_{n,k}\}_{k=0}^{\infty}$ to approach mean rate stability (see~\eqref{eq:2002} in the next proof), that is, the time required for the virtual  queue backlogs to be negligible with respect to the time horizon.

\begin{proof}[Proof of Theorem~\ref{thm:602}]
Consider the optimal randomized policy $\pirand^{*}$ that yields optimal delays $\oW_{n}^{*} \leq d_{n}$ for all classes $n$. Since the $\mathsf{DelayFair}$ policy minimizes the right side of~\eqref{eq:614}, comparing the $\mathsf{DelayFair}$ policy with the policy $\pirand^{*}$ and with the genie decision $r_{n,k}^{*}= \oW_{n}^{*}$ for all classes $n$ and frames $k$, inequality~\eqref{eq:614} under the $\mathsf{DelayFair}$ policy is further upper bounded by
\begin{align} 
&		\Delta(\bZ_{k}, \bY_{k}) + V \sum_{n=1}^{N} \expect{ f_{n}(r_{n,k})\, T_{k} \mid \bZ_{k}, \bY_{k}} \notag \\
& 	\leq C - \expect{T_{k}} \sum_{n=1}^{N} Z_{n,k}\,  \lambda_{n} \,d_{n} + \expect{T_{k}} \sum_{n=1}^{N}(Z_{n,k}+Y_{n,k}) \lambda_{n} \oW_{n}^{*} \notag \\
&\quad	+ \expect{T_{k}} \sum_{n=1}^{N} \left(V f_{n}(\oW_{n}^{*}) - Y_{n,k} \, \lambda_{n} \oW_{n}^{*} \right) \notag \\
&	\leq C + V  \expect{T_{k}} \sum_{n=1}^{N} f_{n}(\oW_{n}^{*}). \label{eq:1005}
\end{align}
Removing the second term of~\eqref{eq:1005} yields
\begin{equation} \label{eq:2001}
\Delta(\bZ_{k}, \bY_{k}) \leq C + V \expect{T_{k}} \sum_{n=1}^{N} f_{n}(\oW_{n}^{*}) \leq C+VD,
\end{equation}
where $D \triangleq \expect{T_{k}} \sum_{n=1}^{N} f_{n}(\oW_{n}^{*})$ is a finite constant. Taking expectation of~\eqref{eq:2001}, summing over $k\in\{0, \ldots, K-1\}$, and noting $L(\bZ_{0}, \bY_{0}) =0$ yields $\expect{L(\bZ_{K}, \bY_{K})} \leq K(C + VD)$. It follows that, for each class $n$ queue $\{Z_{n,k}\}_{k=0}^{\infty}$, we have
\begin{equation} \label{eq:2002}
\begin{split}
0 \leq \frac{\expect{Z_{n,K}}}{K} &\leq \sqrt{\frac{\expect{Z_{n,K}^{2}}}{K^{2}}} \\
&\leq \sqrt{\frac{2 \expect{L(\bZ_{k}, \bY_{K})}}{K^{2}}} \leq \sqrt{\frac{2C}{K} + \frac{2VD}{K}}.
\end{split}
\end{equation}
Passing $K\to\infty$ proves that queue $\{Z_{n,k}\}_{k=0}^{\infty}$ is mean rate stable for all classes $n$. Thus constraints $\oW_{n} \leq d_{n}$ are satisfied by Lemma~\ref{lem:602}. Similarly, the $\{Y_{n,k}\}_{k=0}^{\infty}$ queues are mean rate stable for all classes $n$.

Next, taking expectation of~\eqref{eq:1005}, summing over $k\in\{0, \ldots, K-1\}$, dividing by $V$, and noting $L(\bZ_{0}, \bY_{0})=0$ yields
\[
\begin{split}
&\frac{\expect{L(\bZ_{K}, \bY_{K})}}{V} + \sum_{n=1}^{N} \expect{\sum_{k=0}^{K-1}  f_{n}(r_{n,k})\, T_{k} } \\
&\qquad \leq \frac{KC}{V} + \expect{\sum_{k=0}^{K-1} T_{k}} \sum_{n=1}^{N} f_{n}(\oW_{n}^{*}). 
\end{split}
\]
Removing the first term and dividing by $\expect{\sum_{k=0}^{K-1} T_{k}}$ yields
\begin{align}
\sum_{n=1}^{N}
&\frac{ \expect{\sum_{k=0}^{K-1}   f_{n}(r_{n,k})\,T_{k}}}{\expect{\sum_{k=0}^{K-1} T_{k}}} \leq \frac{KC}{V\expect{\sum_{k=0}^{K-1} T_{k}}} + \sum_{n=1}^{N} f_{n}(\oW_{n}^{*}) \notag \\
&\stackrel{(a)}{\leq} \frac{C\sum_{n=1}^{N}\lambda_{n}}{V}  + \sum_{n=1}^{N} f_{n}(\oW_{n}^{*}), \label{eq:1105}
\end{align}
where (a) follows $\expect{T_{k}} \geq \expect{I_{k}} = 1/(\sum_{n=1}^{N} \lambda_{n})$. By~\cite[Lemma~$7.6$]{Nee10book} and convexity of $f_{n}(\cdot)$, we get
\begin{equation} \label{eq:1106}
\sum_{n=1}^{N} \frac{\expect{\sum_{k=0}^{K-1}  f_{n}(r_{n,k})\,T_{k} }}{\expect{\sum_{k=0}^{K-1} T_{k}}}
	\geq
\sum_{n=1}^{N} f_{n}\left( \frac{\expect{\sum_{k=0}^{K-1} r_{n,k} T_{k}}}{\expect{\sum_{k=0}^{K-1} T_{k}}}\right).
\end{equation}
Combining~\eqref{eq:1105}\eqref{eq:1106} and taking a $\limsup$ as $K\to\infty$ yields
\[
\begin{split}
& \limsup_{K\to\infty} \sum_{n=1}^{N} f_{n}\left( \frac{\expect{\sum_{k=0}^{K-1} r_{n,k} T_{k}}}{\expect{\sum_{k=0}^{K-1} T_{k}}}\right) \\
&\qquad \leq \frac{C\sum_{n=1}^{N}\lambda_{n}}{V} + \sum_{n=1}^{N} f_{n}(\oW_{n}^{*}).
\end{split}
\]
The next lemma, proved in Appendix~\ref{sec:2203}, completes the proof.

\begin{lem} \label{lem:1801}
If queues $\{Y_{n,k}\}_{k=0}^{\infty}$ are mean rate stable for all classes $n$, then
\[
\begin{split}
&\limsup_{K\to\infty} \sum_{n=1}^{N} f_{n}\left( \frac{\expect{\sum_{k=0}^{K-1} \sum_{i\in A_{n,k}} W_{n,k}^{(i)}}}{\expect{\sum_{k=0}^{K-1} \abs{A_{n,k}}}}\right) \\
& \quad\leq \limsup_{K\to\infty} \sum_{n=1}^{N} f_{n}\left( \frac{\expect{\sum_{k=0}^{K-1} r_{n,k} T_{k}}}{\expect{\sum_{k=0}^{K-1} T_{k}}}\right).
\end{split}
\]
\end{lem}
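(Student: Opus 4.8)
The plan is to exploit the fact that the virtual queue $\{Y_{n,k}\}_{k=0}^{\infty}$ directly couples the class-$n$ delay (its ``arrivals'' $\sum_{i\in A_{n,k}}W_{n,k}^{(i)}$) to the auxiliary variable $r_{n,k}$ (its ``service''), so that mean rate stability of $Y_{n,k}$ forces the time-average delay below the time-average of $r_{n,k}$; the two sides of the claim are then just two weightings of the same quantities, separated by a gap that vanishes. Concretely, I would first extract a pointwise-in-$K$ bound exactly as in the proof of Lemma~\ref{lem:602}: dropping the $\max[\cdot,0]$ in~\eqref{eq:622} gives $Y_{n,k+1}\ge Y_{n,k}+\sum_{i\in A_{n,k}}W_{n,k}^{(i)}-r_{n,k}\abs{A_{n,k}}$, and summing over $k\in\{0,\dots,K-1\}$ with $Y_{n,0}=0$ and taking expectations yields
\[
\expect{\sum_{k=0}^{K-1}\sum_{i\in A_{n,k}}W_{n,k}^{(i)}} \le \expect{Y_{n,K}} + \expect{\sum_{k=0}^{K-1} r_{n,k}\abs{A_{n,k}}}.
\]

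Next I would convert the $\abs{A_{n,k}}$-weighting into a $T_{k}$-weighting. Since $r_{n,k}$ is chosen at $t_{k}$ independently of $T_{k}$ and $\abs{A_{n,k}}$, and since $\expect{\abs{A_{n,k}}}=\lambda_{n}\expect{T_{k}}$ by~\eqref{eq:2101}, conditioning on $r_{n,k}$ gives the per-frame identity $\expect{r_{n,k}\abs{A_{n,k}}}=\lambda_{n}\expect{r_{n,k}T_{k}}$, while in the denominator linearity gives $\expect{\sum_{k}\abs{A_{n,k}}}=\lambda_{n}\expect{\sum_{k}T_{k}}$. Writing $u_{n}(K)$ and $v_{n}(K)$ for the arguments of $f_{n}$ on the left and right of the claim, dividing the displayed bound by $\lambda_{n}\expect{\sum_{k}T_{k}}$ gives
\[
u_{n}(K)\le v_{n}(K)+\frac{\expect{Y_{n,K}}}{\lambda_{n}\expect{\sum_{k=0}^{K-1}T_{k}}}.
\]
Because $\expect{\sum_{k=0}^{K-1}T_{k}}=K\expect{T_{0}}\ge K\expect{I_{0}}$, the last term is at most $\tfrac{1}{\lambda_{n}\expect{I_{0}}}\cdot\tfrac{\expect{Y_{n,K}}}{K}$, which tends to $0$ by mean rate stability of $Y_{n,k}$; denote it $\epsilon_{n}(K)\to 0$.

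Finally I would push this through $f_{n}$ and take limits. Monotonicity of $f_{n}$ gives $\sum_{n}f_{n}(u_{n}(K))\le\sum_{n}f_{n}(v_{n}(K)+\epsilon_{n}(K))$. The crucial observation is that each $v_{n}(K)\in[0,d_{n}]$, being a $T_{k}$-weighted average of the $r_{n,k}\in[0,d_{n}]$, so everything stays in a fixed compact set. I would take a subsequence $K_{j}$ along which $\sum_{n}f_{n}(u_{n}(K_{j}))$ converges to the left-hand $\limsup$, then a further subsequence on which each bounded sequence $v_{n}(K_{j})$ converges to some $v_{n}^{\star}\in[0,d_{n}]$; since $\epsilon_{n}(K_{j})\to 0$, continuity of $f_{n}$ gives both $\sum_{n}f_{n}(v_{n}(K_{j})+\epsilon_{n}(K_{j}))\to\sum_{n}f_{n}(v_{n}^{\star})$ and $\sum_{n}f_{n}(v_{n}(K_{j}))\to\sum_{n}f_{n}(v_{n}^{\star})$, whence
\[
\limsup_{K\to\infty}\sum_{n=1}^{N}f_{n}(u_{n}(K)) \le \sum_{n=1}^{N}f_{n}(v_{n}^{\star}) = \lim_{j\to\infty}\sum_{n=1}^{N}f_{n}(v_{n}(K_{j})) \le \limsup_{K\to\infty}\sum_{n=1}^{N}f_{n}(v_{n}(K)).
\]
The main obstacle is precisely this last step: the bound $u_{n}(K)\le v_{n}(K)+\epsilon_{n}(K)$ is only pointwise in $K$, so one must ensure that applying $f_{n}$ and passing to $\limsup$ does not destroy the comparison. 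This is resolved by compactness of $[0,d_{n}]$ together with continuity of $f_{n}$ (equivalently, uniform continuity there), which lets the vanishing perturbation $\epsilon_{n}(K)$ be absorbed; without the \emph{a priori} boundedness of $v_{n}(K)$ one could not control $f_{n}(v_{n}(K)+\epsilon_{n}(K))-f_{n}(v_{n}(K))$ uniformly in $K$.
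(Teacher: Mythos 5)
Your proposal is correct and follows essentially the same route as the paper's proof in Appendix~\ref{sec:2203}: telescoping the $Y_{n,k}$ update~\eqref{eq:622} with $Y_{n,0}=0$, converting $\expect{r_{n,k}\abs{A_{n,k}}}$ into $\lambda_{n}\expect{r_{n,k}T_{k}}$ via the independence of $r_{n,k}$ from $(T_{k},\abs{A_{n,k}})$, and then absorbing the vanishing term $\expect{Y_{n,K}}/K$ through monotonicity and continuity of $f_{n}$ along a subsequence attaining the $\limsup$. If anything, your explicit compactness step (extracting a further subsequence with $v_{n}(K_{j})\to v_{n}^{\star}\in[0,d_{n}]$) is slightly more careful than the paper's step~(a), which implicitly assumes $\eta^{(n)}_{K_{m}}$ converges along the chosen subsequence.
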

\end{proof}

\section{Delay-Constrained Optimal Power Control} \label{sec:powercontrol}

In this section we incorporate dynamic power control into the queueing system. As mentioned in Section~\ref{sec:702}, we focus on frame-based policies that allocate a constant power $P_{k} \in [\pmin, \pmax]$ over the duration of the $k$th busy period (we assume zero power is allocated when the system is idle). Here, interesting quantities such as frame size $T_{k}$, busy period $B_{k}$, the set $A_{n,k}$ of per-frame class $n$ arrivals, and queueing delay $W_{n,k}^{(i)}$ are all functions of power $P_{k}$. Similar to the delay definition~\eqref{eq:1103}, we define the average power consumption
\begin{equation} \label{eq:1701}
\Pav \triangleq \limsup_{K\to\infty} \frac{\expect{\sum_{k=0}^{K-1} P_{k}\, B_{k}(P_{k})}}{\expect{\sum_{k=0}^{K-1} T_{k}(P_{k})}},
\end{equation}
where $B_{k}(P_{k})$ and $T_{k}(P_{k})$ emphasize the power dependence of $B_{k}$ and $T_{k}$. It is easy to show that both $B_{k}(P_{k})$ and $T_{k}(P_{k})$ are decreasing in $P_{k}$. The goal is to solve the delay-constrained power minimization problem:
\begin{align}
\text{minimize:} &\quad \Pav \label{eq:701} \\
\text{subject to:} &\quad \oW_{n} \leq d_{n}, \quad \forall n\in\{1, \ldots, N\} \label{eq:702}
\end{align}
over frame-based power control and nonpreemptive priority policies. 

\subsection{Power-Delay Performance Region} \label{sec:1103}

Every frame-based power control and nonpreemptive priority policy can be viewed as a timing sharing or randomization of stationary policies that make the same deterministic decision in every frame. Using this point of view, next we give an example of the joint power-delay performance region resulting from frame-based policies. Consider a two-class nonpreemptive $M/G/1$ queue with parameters:
\begin{itemize}
\item $\lambda_{1} = 1$, $\lambda_{2} = 2$, $\expect{S_{1}} = \expect{S_{2}} = \expect{S_{2}^{2}} = 1$, $\expect{S_{1}^{2}} = 2$. $\mu(P) = P$. For each class $n\in\{1, 2\}$, the service time $X_{n}$ has mean $\expect{X_{n}} = \expect{S_{n}}/P$ and second moments $\expect{X_{n}^{2}} = \expect{S_{n}^{2}}/P^{2}$. For stability, we must have $\lambda_{1} \expect{X_{1}} + \lambda_{2} \expect{X_{2}} < 1 \Rightarrow P>3$. In this example, let $[4, 10]$ be the feasible power region.
\end{itemize}
Under a constant power allocation $P$, let $\mathcal{W}(P)$ denote the set of achievable queueing delay vectors $(\oW_{1}, \oW_{2})$. Define $\rho_{n} \triangleq \lambda_{n} \expect{X_{n}}$ and $R \triangleq \frac{1}{2} \sum_{n=1}^{2} \lambda_{n}\expect{X_{n}^{2}}$. Then we have
\begin{equation} \label{eq:1115}
\mathcal{W}(P) = \Set{ (\oW_{1}, \oW_{2}) |
	\begin{gathered}
	\oW_{n} \geq \frac{R}{1-\rho_{n}},\, n\in\{1, 2\} \\
	\sum_{n=1}^{2} \rho_{n} \oW_{n} = \frac{(\rho_{1}+\rho_{2}) R}{1-\rho_{1}-\rho_{2}}
	\end{gathered}
}.
\end{equation}
The inequalities in $\mathcal{W}(P)$ show that the minimum delay for each class is attained when it has priority over the other. The  equality in $\mathcal{W}(P)$ follows the $M/G/1$ conservation law~\cite{Kle64book}. Using the above parameters, we get
\[
\mathcal{W}(P) = \Set{ (\oW_{1}, \oW_{2}) |
	\begin{gathered}
	\oW_{1} \geq \frac{2}{P(P-1)} \\
	\oW_{2} \geq \frac{2}{P(P-2)} \\
	\oW_{1} + 2 \oW_{2} = \frac{6}{P(P-3)}
	\end{gathered}
}.
\]
Fig.~\ref{fig:601} shows the collection of delay regions $\mathcal{W}(P)$ for different values of $P\in [4, 10]$. This joint region contains all feasible delay vectors under  constant power allocations.
\begin{figure}[htb] 
\centering
\includegraphics[width=2in]{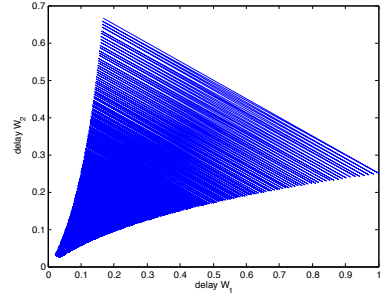}
\caption{The collection of average delay regions $\mathcal{W}(P)$ for different power levels $P\in[4, 10]$.}
\label{fig:601}
\end{figure}
\begin{figure}[htb] 
\centering
\includegraphics[width=2in]{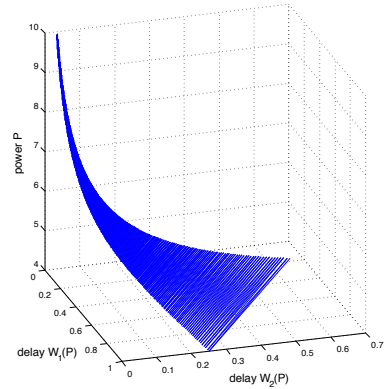}
\caption{The augmented performance region of power-delay vectors $(P, \oW_{1}(P), \oW_{2}(P))$.}
\label{fig:602}
\end{figure}
Fig.~\ref{fig:602} shows the associated augmented performance region of power-delay vectors $(P, \oW_{1}(P), \oW_{2}(P))$; its projection onto the delay plane is Fig.~\ref{fig:601}. After timing sharing or randomization, the performance region of all frame-based power control and nonpreemptive priority policies is the convex hull of Fig.~\ref{fig:602}. The problem~\eqref{eq:701}-\eqref{eq:702} is viewed a stochastic optimization over such a convexified power-delay performance region.

\subsection{Dynamic Power Control Policy} \label{sec:1102}
We setup the same virtual delay queues $Z_{n,k}$ as in~\eqref{eq:601}, and assume $Z_{n,0} = 0$ for all classes $n$. We represent a strict nonpreemptive priority policy by a permutation $(\pi_{n})_{n=1}^{N}$ of $\{1, \ldots, N\}$, where $\pi_{n}$ denotes the job class that gets the $n$th highest priority.

\underline{\textit{Dynamic Power Control ($\mathsf{DynPower}$) Policy:}}

\begin{enumerate}
\item In the $k$th frame for each $k\in\Z^{+}$, use the nonpreemptive strict priority rule $(\pi_{n})_{n=1}^{N}$ that assigns priorities in the decreasing order of $Z_{n,k}/\expect{S_{n}}$; ties are broken arbitrarily. 
\item Allocate a fixed power $P_{k}$ in frame $k$, where $P_{k}$ is the solution to the following minimization of a weighted sum of power and average delays:
\begin{align}
\text{minimize:} &\quad \left(V\sum_{n=1}^{N} \lambda_{n} \expect{S_{n}}\right) \frac{P_{k}}{\mu(P_{k})} \label{eq:607} \\
&\qquad + \sum_{n=1}^{N} Z_{\pi_{n}, k} \, \lambda_{\pi_{n}} \oW_{\pi_{n}}(P_{k})  \notag \\
\text{subject to:} &\quad P_{k} \in [\pmin, \pmax].  \label{eq:608}
\end{align}
The value $\oW_{\pi_{n}}(P_{k})$, given later in~\eqref{eq:625}, is the average delay of class $\pi_{n}$ under the priority rule $(\pi_{n})_{n=1}^{N}$ and power allocation $P_{k}$. 
\item Update queues $Z_{n,k}$ for all classes $n\in\{1, \ldots, N\}$ by~\eqref{eq:601} at every frame boundary.
\end{enumerate}
The above $\mathsf{DynPower}$ policy requires the knowledge of arrival rates and the first two moments of job sizes for all classes $n$ (see~\eqref{eq:625}). We can remove its dependence on the second moments of job sizes, so that it only depends on the mean of arrivals and job sizes; see Appendix~\ref{sec:2201} for details.


\subsection{Motivation of the $\mathsf{DynPower}$ Policy}

We construct the  Lyapunov drift argument. Define the Lyapunov function $L(\bZ_{k}) = \frac{1}{2} \sum_{n=1}^{N} Z_{n,k}^{2}$ and the one-frame Lyapunov drift $\Delta(\bZ_{k}) = \expect{L(\bZ_{k+1}) - L(\bZ_{k}) \mid \bZ_{k}}$. Similar as the derivation in Section~\ref{sec:2202}, we have the Lyapunov drift inequality:
\begin{equation} \label{eq:624}
\Delta(\bZ_{k}) \leq C + \sum_{n=1}^{N} Z_{n,k}\, \expect{\sum_{i\in A_{n,k}} \paren{W_{n,k}^{(i)}-d_{n}} \mid \bZ_{k}}.
\end{equation}
Adding the weighted energy $V \expect{P_{k}\, B_{k}(P_{k}) \mid \bZ_{k}}$ to both sides of~\eqref{eq:624}, where $V>0$ is a control parameter, yields
\begin{equation} \label{eq:603}
\Delta(\bZ_{k}) + V \expect{P_{k}\, B_{k}(P_{k}) \mid \bZ_{k}} \leq C + \Phi(\bZ_{k}),
\end{equation}
where
\[
\begin{split}
\Phi(\bZ_{k}) &\triangleq \mathbb{E} \Bigg[ V P_{k}\, B_{k}(P_{k}) \\
&\quad +\sum_{n=1}^{N} Z_{n,k} \sum_{i\in A_{n,k}} (W_{n,k}^{(i)}-d_{n})\mid \bZ_{k} \Bigg].
\end{split}
\]
We are interested in the frame-based policy that, in each frame $k$,  allocates power and assigns priorities to minimize the ratio
\begin{equation} \label{eq:604}
\frac{
	\Phi(\bZ_{k})
}{
	\expect{T_{k}(P_{k}) \mid \bZ_{k}}
}.
\end{equation}
Note that frame size $T_{k}(P_{k})$ depends on $\bZ_{k}$ because the power allocation that affects $T_{k}(P_{k})$ may be $\bZ_{k}$-dependent. For any given power allocation $P_{k}$, $T_{k}(P_{k})$ is independent of $\bZ_{k}$.

Lemma~\ref{lem:607} next shows that the minimizer of~\eqref{eq:604} is a deterministic power allocation and strict nonpreemptive priority policy. Specifically, we may consider each $p\in \mathcal{P}$ in Lemma~\ref{lem:607}  denotes a deterministic power allocation and strict  priority policy, and random variable $P$ denotes a randomized power control and priority policy.

\begin{lem} \label{lem:607}
Let $P$ be a continuous random variable with state space $\mathcal{P}$. Let $G$ and $H$ be two random variables that depend on $P$ such that, for each $p \in \mathcal{P}$, $G(p)$ and $H(p)$ are well-defined random variables. Define
\[
p^{*} \triangleq \argmin_{p\in\mathcal{P}} \frac{\expect{G(p)}}{\expect{H(p)}},\quad U^{*} \triangleq \frac{\expect{G(p^{*})}}{\expect{H(p^{*})}}.
\]
Then $\frac{\expect{G}}{\expect{H}} \geq U^{*}$ regardless of the  distribution of $P$.
\end{lem}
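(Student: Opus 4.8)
The plan is to reduce this statement to the elementary ``mediant inequality'' $\frac{a_{1}+a_{2}}{b_{1}+b_{2}} \geq \min\{\frac{a_{1}}{b_{1}}, \frac{a_{2}}{b_{2}}\}$ for positive denominators, in its continuous averaged form. First I would fix notation by writing $g(p) \triangleq \expect{G(p)}$ and $h(p) \triangleq \expect{H(p)}$ for the conditional means given $P = p$. In the intended application $H(p)$ is a frame length, so I would record at the outset that $h(p) > 0$ for every $p\in\mathcal{P}$ and that $g, h$ are integrable; this is precisely what makes every ratio appearing below well-defined.

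The core observation is that $U^{*}$, being by its very definition the minimum of the pointwise ratio over $p\in\mathcal{P}$, is a lower bound on each such ratio: $\frac{g(p)}{h(p)} \geq U^{*}$ for all $p$. Because $h(p) > 0$, this rearranges to the pointwise \emph{linear} inequality $g(p) \geq U^{*} h(p)$, which no longer involves a quotient and is therefore stable under taking expectations over the distribution of $P$.

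Next I would invoke the law of total expectation to express the unconditional means as averages of the conditional means over the law of $P$, namely $\expect{G} = \expect{g(P)}$ and $\expect{H} = \expect{h(P)}$. Integrating the pointwise inequality $g(p) \geq U^{*} h(p)$ against the distribution of $P$ then gives $\expect{G} \geq U^{*} \expect{H}$. Dividing by $\expect{H} > 0$ yields $\frac{\expect{G}}{\expect{H}} \geq U^{*}$, as claimed; the bound holds for an \emph{arbitrary} distribution of $P$ precisely because the constant $U^{*}$ in the pointwise bound does not depend on that distribution.

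I expect no serious obstacle here: the only points demanding care are measure-theoretic housekeeping rather than any genuine difficulty. Specifically, I must confirm that $h(p)$ is strictly positive (so each ratio is defined and the rearrangement to $g(p) \geq U^{*} h(p)$ is a valid equivalence), that $g$ and $h$ are integrable so that the law of total expectation applies, and that $\expect{H} > 0$ so the final division is legitimate. All three follow from the structural fact that $H$ represents frame sizes, which are bounded below by the strictly positive mean idle-period length $1/\sum_{n=1}^{N} \lambda_{n}$.
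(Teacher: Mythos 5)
Your proposal is correct and takes essentially the same route as the paper's proof: both turn the defining property of $p^{*}$ into the pointwise linear bound $\expect{G(p)} \geq U^{*}\,\expect{H(p)}$, average over the distribution of $P$ by iterated expectation, and divide by $\expect{H} > 0$. The only difference is cosmetic --- you make explicit the positivity of $\expect{H(p)}$ and the integrability hypotheses that the paper leaves implicit.
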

\begin{proof}
For each $p\in \mathcal{P}$, we have $\frac{\expect{G(p)}}{\expect{H(p)}} \geq U^{*}$. Then
\[
\frac{\expect{G}}{\expect{H}} = \frac{\mathbb{E}_{P}\left[ \expect{G(p)} \right]}{\mathbb{E}_{P}\left[ \expect{H(p)} \right]} \geq \frac{ \mathbb{E}_{P}\left[ U^{*} \expect{H(p)} \right]}{\mathbb{E}_{P}\left[ \expect{H(p)} \right]} = U^{*},
\]
which is independent of the distribution of $P$.
\end{proof}

Under a fixed power allocation $P_{k}$ and a strict nonpreemptive priority rule,~\eqref{eq:604} is equal to
\begin{equation} \label{eq:1604}
\begin{split}
&\frac{
	V \expect{P_{k}B_{k}(P_{k})} + \sum_{n=1}^{N} Z_{n,k} \, \lambda_{n} (\oW_{n,k}(P_{k}) - d_{n}) \expect{T_{k}(P_{k})}
	}{\expect{T_{k}(P_{k})}} \\
&\quad = V P_{k} \frac{\sum_{n=1}^{N} \lambda_{n} \expect{S_{n}}}{\mu(P_{k})} + \sum_{n=1}^{N} Z_{n,k} \, \lambda_{n} (\oW_{n,k}(P_{k})-d_{n}),
\end{split}
\end{equation}
where by renewal theory
\[
\frac{\expect{B_{k}(P_{k})}}{\expect{T_{k}(P_{k})}} = \sum_{n=1}^{N} \rho_{n}(P_{k}) = \sum_{n=1}^{N} \lambda_{n} \frac{\expect{S_{n}}}{\mu(P_{k})}
\]
and power-dependent terms are written as functions of $P_{k}$. It follows that our desired policy in every frame $k$ minimizes
\begin{equation} \label{eq:606}
\left(V\sum_{n=1}^{N}\lambda_{n}\expect{S_{n}}\right) \frac{P_{k}}{\mu(P_{k})} + \sum_{n=1}^{N} Z_{n,k} \, \lambda_{n} \oW_{n,k}(P_{k})
\end{equation}
over constant power allocations $P_{k}\in [\pmin, \pmax]$ and nonpreemptive strict priority rules.

To further simplify, for each fixed power level $P_{k}$, by Lemma~\ref{lem:603}, the $c\mu$ rule that assigns priorities in the decreasing order of $Z_{n,k}/\expect{S_{n}}$ minimizes the second term of~\eqref{eq:606} (note that minimizing a linear function over strict priority rules is equivalent to minimizing over all randomized priority rules, since a vertex of the performance polytope attains the minimum). This strict priority policy is optimal regardless of the value of $P_{k}$, and thus is overall optimal; priority assignment and power control are decoupled. We represent the optimal priority policy by $(\pi_{n})_{n=1}^{N}$, recalling that $\pi_{n}$ denotes the job class that gets the $n$th highest priority. Under priorities $(\pi_{n})_{n=1}^{N}$ and a fixed power allocation $P_{k}$,  the average delay $\oW_{\pi_{n}}(P_{k})$ for class $\pi_{n}$ is equal to
\begin{equation} \label{eq:625}
\begin{split}
\oW_{\pi_{n}}(P_{k}) &= \frac{\frac{1}{2} \sum_{n=1}^{N} \lambda_{n} \expect{X_{n}^{2}}}{(1-\sum_{m=0}^{n-1} \rho_{\pi_{m}})(1-\sum_{m=0}^{n} \rho_{\pi_{m}})} \\
&= \frac{
	\frac{1}{2} \sum_{n=1}^{N} \lambda_{n} \expect{S_{n}^{2}}
}{
	(\mu(P_{k}) - \sum_{m=0}^{n-1} \hat{\rho}_{\pi_{m}})(\mu(P_{k}) - \sum_{m=0}^{n} \hat{\rho}_{\pi_{m}})
},
\end{split}
\end{equation}
where  $\hat{\rho}_{\pi_{m}} \triangleq \lambda_{\pi_{m}} \expect{S_{\pi_{m}}}$ if $m\geq 1$ and $0$ if $m=0$. The above discussions lead to the $\mathsf{DynPower}$ policy.

\subsection{Performance of the $\mathsf{DynPower}$ Policy}

\begin{thm} \label{thm:603}
Let $P^{*}$ be the optimal average power of the problem~\eqref{eq:701}-\eqref{eq:702}. The $\mathsf{DynPower}$ policy achieves delay constraints $\oW_{n}\leq d_{n}$ for all classes $n\in\{1, \ldots, N\}$ and attains average power $ \Pav$ satisfying
\[
\Pav \leq \frac{C\sum_{n=1}^{N}\lambda_{n}}{V} + P^{*},
\]
where $C>0$ is a finite constant and $V>0$ a predefined control parameter.
\end{thm}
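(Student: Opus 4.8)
The plan is to mirror the proof of Theorem~\ref{thm:602}, but now the ``penalty'' being minimized is the weighted energy $V\,\expect{P_k B_k(P_k)}$ rather than a delay penalty, and the extra subtlety is that frame size $T_k(P_k)$ is now policy-dependent. The two claims to establish are: (i) all $\{Z_{n,k}\}$ queues are mean rate stable, so that $\oW_n \leq d_n$ follows from Lemma~\ref{lem:602}; and (ii) the average power bound. The key structural fact I would exploit is that the $\mathsf{DynPower}$ policy minimizes the ratio~\eqref{eq:604} over \emph{all} frame-based randomized power/priority policies, which by Lemma~\ref{lem:607} reduces to minimizing over deterministic ones, i.e. exactly what~\eqref{eq:606} and hence the $\mathsf{DynPower}$ policy does.

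The comparison step is the heart of the argument. By Lemma~\ref{lem:2201} (extended to the power-augmented region described in Section~\ref{sec:1103}), there exists an optimal randomized stationary policy $\pirand^{*}$ that allocates power and priorities i.i.d. across frames, achieves $\oW_n^{*} \leq d_n$ for all $n$, and attains the optimal average power $P^{*}$; crucially, under such a stationary policy the ratios of expectations telescope into genuine time averages, so $\expect{P_k B_k}/\expect{T_k} = P^{*}$ and $\expect{\sum_{i\in A_{n,k}}(W_{n,k}^{(i)}-d_n)}/\expect{T_k} = \lambda_n(\oW_n^{*}-d_n) \leq 0$ for this policy. Since $\mathsf{DynPower}$ minimizes the ratio~\eqref{eq:604}, I would write
\[
\frac{\Phi^{\mathsf{DynPower}}(\bZ_k)}{\expect{T_k^{\mathsf{DynPower}}\mid \bZ_k}} \leq \frac{\Phi^{\pirand^{*}}(\bZ_k)}{\expect{T_k^{\pirand^{*}}}} \leq V P^{*},
\]
and then multiply through by $\expect{T_k^{\mathsf{DynPower}}\mid \bZ_k}$ to feed back into the drift-plus-penalty inequality~\eqref{eq:603}. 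This yields $\Delta(\bZ_k) + V\expect{P_k B_k \mid \bZ_k} \leq C + V P^{*}\,\expect{T_k^{\mathsf{DynPower}}\mid \bZ_k}$.

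From this one inequality both conclusions drop out by the now-standard telescoping. Dropping the nonnegative energy term gives $\Delta(\bZ_k) \leq C + VP^{*}\expect{T_k}$, whose right side is bounded by a finite constant (since $\expect{T_k} = \expect{T_k(P_k)} \leq \expect{T_k(\pmin)}$ is uniformly bounded, as $T_k$ is decreasing in power and $P_k \geq \pmin$); summing, using $L(\bZ_0)=0$, and applying $\expect{Z_{n,K}} \leq \sqrt{\expect{Z_{n,K}^2}}$ exactly as in~\eqref{eq:2002} gives $\expect{Z_{n,K}}/K \to 0$, hence mean rate stability and $\oW_n \leq d_n$. For the power bound, I would instead drop the delay-drift term: taking expectations, summing over $k\in\{0,\ldots,K-1\}$, and using $\expect{L(\bZ_K)}\geq 0$ yields $V\,\expect{\sum_{k=0}^{K-1}P_k B_k} \leq C K + V P^{*}\,\expect{\sum_{k=0}^{K-1}T_k}$. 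Dividing by $V\,\expect{\sum_{k=0}^{K-1}T_k}$, using $\expect{T_k}\geq \expect{I_k} = 1/(\sum_n \lambda_n)$ so that $K/\expect{\sum_k T_k} \leq \sum_n\lambda_n$, and taking $\limsup$ recovers $\Pav \leq C\sum_n\lambda_n/V + P^{*}$ directly from definition~\eqref{eq:1701}.

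I expect the main obstacle to be the comparison step, specifically justifying that the optimal policy $\pirand^{*}$ can be taken stationary (i.i.d. across frames) so that its ratio of expectations equals the true optimal average power $P^{*}$, and that such a policy attains $P^{*}$ over the convexified power-delay region of Section~\ref{sec:1103}. This requires invoking the existence of an optimal randomized stationary policy over the convex-hull performance region --- the analogue of Lemma~\ref{lem:2201} augmented with power --- and checking that for a stationary policy the numerator and denominator of~\eqref{eq:604} are each per-frame constants, so Lemma~\ref{lem:607} applies and the ratio collapses to $VP^{*}$. The remaining steps (boundedness of $\expect{T_k}$, the second-moment drift bound hidden in the constant $C$, and the telescoping) are routine adaptations of the arguments already given for Theorems~\ref{thm:601} and~\ref{thm:602}.
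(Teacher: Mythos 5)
Your proposal is correct and takes essentially the same route as the paper's proof: comparing the ratio~\eqref{eq:604} under $\mathsf{DynPower}$ against an optimal stationary randomized policy $\pirand^{*}$ (whose existence the paper likewise justifies only via the Section~\ref{sec:1103} characterization of the convexified power-delay region, so your flagged ``obstacle'' is treated no more rigorously in the paper itself), then feeding $\Phi(\bZ_{k}) \leq V P^{*} \expect{T_{k}(P_{k}) \mid \bZ_{k}}$ into~\eqref{eq:603} and telescoping twice. The two auxiliary bounds you invoke, $\expect{T_{k}(P_{k})} \leq \expect{T_{0}(\pmin)}$ for mean rate stability and $\expect{T_{k}(P_{k})} \geq \expect{I_{k}} = 1/(\sum_{n=1}^{N}\lambda_{n})$ for the power bound, are exactly the ones the paper uses.
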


\begin{proof}[Proof of Theorem~\ref{thm:603}]
As discussed in Section~\ref{sec:1103}, the power-delay performance region in this problem is spanned by stationary power control and nonpreemptive priority policies that use the same (possibly random) decision in every frame. Let $\pi^{*}$ denote one such policy that yields the optimal average power $P^{*}$ with feasible delays $\oW_{n}^{*} \leq d_{n}$ for all classes $n$. Let $P_{k}^{*}$ be its power allocation in frame $k$. Since policy $\pi^{*}$ makes i.i.d. decisions over frames, by renewal reward theory we have
\[
P^{*} = \frac{\expect{P_{k}^{*} B(P_{k}^{*})}}{\expect{T(P_{k}^{*})}}.
\]
Then the ratio $\frac{\Phi(\bZ_{k})}{\expect{T_{k}(P_{k}) \mid \bZ_{k}}}$ under policy $\pi^{*}$ (see the left side of~\eqref{eq:1604}) is equal to
\[
V\frac{\expect{P_{k}^{*} B(P_{k}^{*})}}{\expect{T(P_{k}^{*})}} + \sum_{n=1}^{N} Z_{n,k}\, \lambda_{n} \left(\oW_{n}^{*}-d_{n}\right) \leq V P^{*}.
\]
Since the $\mathsf{DynPower}$ policy minimizes $\frac{\Phi(\bZ_{k})}{\expect{T_{k}(P_{k}) \mid \bZ_{k}}}$ over frame-based policies, including the optimal policy $\pi^{*}$, the ratio $\frac{\Phi(\bZ_{k})}{\expect{T_{k}(P_{k}) \mid \bZ_{k}}}$ under the $\mathsf{DynPower}$ policy satisfies
\[
\frac{\Phi(\bZ_{k})}{\expect{T_{k}(P_{k}) \mid \bZ_{k}}} \leq V P^{*} \Rightarrow \Phi(\bZ_{k}) \leq V P^{*} \expect{T_{k}(P_{k}) \mid \bZ_{k}}.
\]
Using this bound in~\eqref{eq:603} yields
\[
\Delta(\bZ_{k}) + V \expect{P_{k}\, B_{k}(P_{k}) \mid \bZ_{k}} \leq C+ V P^{*} \, \expect{T_{k}(P_{k}) \mid \bZ_{k}}.
\]
Taking expectation, summing over $k\in\{0, \ldots, K-1\}$, and noting $L(\bZ_{0})=0$ yields
\begin{equation} \label{eq:610}
\begin{split}
&		\expect{L(\bZ_{K})} + V \sum_{k=0}^{K-1} \expect{P_{k}\,B_{k}(P_{k})} \\
&\quad	\leq KC + V P^{*}\, \expect{\sum_{k=0}^{K-1} T_{k}(P_{k})}.
\end{split}
\end{equation}
Since $\expect{T_{k}(P_{k})}$ is decreasing in $P_{k}$ and, under a fixed power allocation, is independent of scheduling policies, we get $\expect{T_{k}(P_{k})} \leq \expect{T_{0}(\pmin)}$ and
\[
\begin{split}
&		\expect{L(\bZ_{K})} + V \sum_{k=0}^{K-1} \expect{P_{k}\,B_{k}(P_{k})} \\
&\quad	\leq K( C + V P^{*}\, \expect{T_{0}(\pmin)}).
\end{split}
\]
Removing the second term and dividing by $K^{2}$ yields
\[
\frac{\expect{L(\bZ_{K})}}{K^{2}} \leq \frac{C+V  P^{*}\,  \expect{T_{0}(\pmin)}}{K}.
\]
Combining it with
\[
0\leq \frac{\expect{Z_{n,K}}}{K} \leq \sqrt{\frac{\expect{Z_{n,K}^{2}}}{K^{2}}} \leq \sqrt{ \frac{2 \expect{L(\bZ_{K})}}{K^{2}}}
\]
and passing $K\to\infty$ proves that queue $\{Z_{n,k}\}_{k=0}^{\infty}$ is mean rate stable for all classes $n$. Thus $\oW_{n}\leq d_{n}$ for all $n$ by Lemma~\ref{lem:602}.

Further, removing the first term in~\eqref{eq:610} and dividing the result by $V \expect{ \sum_{k=0}^{K-1} T_{k}(P_{k})}$ yields
\[
\begin{split}
\frac{ \expect{\sum_{k=0}^{K-1} P_{k}\, B_{k}(P_{k})}}{\expect{ \sum_{k=0}^{K-1} T_{k}(P_{k})}}
&\leq \frac{C}{V}\frac{K}{\expect{ \sum_{k=0}^{K-1} T_{k}(P_{k})}} + P^{*} \\
&\stackrel{(a)}{\leq} \frac{C\sum_{n=1}^{N}\lambda_{n}}{V} + P^{*},
\end{split}
\]
where (a) uses $\expect{T_{k}(P_{k})} \geq \expect{I_{k}} = 1/(\sum_{n=1}^{N} \lambda_{n})$. Passing $K\to\infty$ completes the proof.
\end{proof}

\section{Optimizing Delay Penalties with Average Power Constraint} \label{sec:1801}

The fourth problem we consider is to, over frame-based power control and nonpreemptive priority policies, minimize a separable convex function of delay vectors $(\oW_{n})_{n=1}^{N}$ subject to an average power constraint:
\begin{align}
\text{minimize:} &\quad \sum_{n=1}^{N} f_{n}(\oW_{n}) \label{eq:1705} \\
\text{subject to:} &\quad \Pav \leq \pconst. \label{eq:1706}
\end{align}
The value $\Pav$ is defined in~\eqref{eq:1701} and $\pconst >0$ is a given feasible bound. The penalty functions $f_{n}(\cdot)$  are assumed nondecreasing, nonnegative, continuous, and convex for all classes $n$. Power allocation in every busy period takes values in $[\pmin, \pmax]$, and no power is allocated when the system is idle. In this problem, the region of feasible power-delay vectors $(\Pav, \oW_{1}, \ldots, \oW_{N})$ is complicated because feasible delays $(\oW_{n})_{n=1}^{N}$ are indirectly decided by the power constraint~\eqref{eq:1706}. Using the same methodology as in the previous three problems, we construct a frame-based policy to solve~\eqref{eq:1705}-\eqref{eq:1706}.

We setup the virtual delay queue $\{Y_{n,k}\}_{k=0}^{\infty}$ for each class $n\in\{1, \ldots, N\}$ as in~\eqref{eq:622}, in which the auxiliary variable $r_{n,k}$ takes values in $[0, \rmax_{n}]$ for some $\rmax_{n}>0$ sufficiently large.\footnote{For each class $n$, we need $\rmax_{n}$ to be larger than the optimal delay $\oW_{n}^{*}$ in problem~\eqref{eq:1705}-\eqref{eq:1706}. One way is to let $\rmax_{n}$ be the maximum average delay over all classes under the minimum power allocation $\pmin$.}  Define the discrete-time \emph{virtual power queue} $\{X_{k}\}_{k=0}^{\infty}$ that evolves at frame boundaries $\{t_{k}\}_{k=0}^{\infty}$ as
\begin{equation} \label{eq:1702}
X_{k+1} = \max\left[ X_{k} + P_{k} B_{k}(P_{k}) - \pconst T_{k}(P_{k}), \, 0\right].
\end{equation}
Assume $X_{0} = 0$. The $\{X_{k}\}_{k=0}^{\infty}$ queue helps to achieve the power constraint $\Pav \leq \pconst$.
\begin{lem} \label{lem:1701}
If the virtual power queue $\{X_{k}\}_{k=0}^{\infty}$ is mean rate stable, then $\Pav \leq \pconst$.
\end{lem}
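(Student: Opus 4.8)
The plan is to follow exactly the template of the proof of Lemma~\ref{lem:602}, treating $P_{k} B_{k}(P_{k})$ as the ``arrivals'' and $\pconst T_{k}(P_{k})$ as the ``service'' of the virtual power queue $\{X_{k}\}_{k=0}^{\infty}$. First I would drop the maximum in the recursion~\eqref{eq:1702} to obtain the one-step inequality $X_{k+1} \geq X_{k} + P_{k} B_{k}(P_{k}) - \pconst T_{k}(P_{k})$. Summing this over $k\in\{0,\ldots,K-1\}$, using $X_{0} = 0$, and taking expectations telescopes the left side to $\expect{X_{K}}$ and yields
\[
\expect{X_{K}} \geq \expect{\sum_{k=0}^{K-1} P_{k} B_{k}(P_{k})} - \pconst \, \expect{\sum_{k=0}^{K-1} T_{k}(P_{k})}.
\]

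Next I would divide through by $\expect{\sum_{k=0}^{K-1} T_{k}(P_{k})}$ so that the ratio defining $\Pav$ in~\eqref{eq:1701} appears, giving
\[
\Pav \leq \pconst + \limsup_{K\to\infty} \frac{\expect{X_{K}}}{\expect{\sum_{k=0}^{K-1} T_{k}(P_{k})}}.
\]
It then remains to show that the $\limsup$ residual vanishes. I would write this residual as $(\expect{X_{K}}/K)\cdot(K/\expect{\sum_{k=0}^{K-1} T_{k}(P_{k})})$ and bound the second factor using $T_{k}(P_{k}) = I_{k} + B_{k}(P_{k}) \geq I_{k}$, so that $\expect{\sum_{k=0}^{K-1} T_{k}(P_{k})} \geq \sum_{k=0}^{K-1}\expect{I_{k}} = K/(\sum_{n=1}^{N}\lambda_{n})$ and hence $K/\expect{\sum_{k=0}^{K-1} T_{k}(P_{k})} \leq \sum_{n=1}^{N}\lambda_{n}$. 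Mean rate stability then forces $\expect{X_{K}}/K\to 0$, annihilating the whole term and giving $\Pav\leq\pconst$.

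The one point requiring care---and the only place this proof differs conceptually from Lemma~\ref{lem:602}---is that here the frame sizes $T_{k}(P_{k})$ are themselves power-dependent, hence random and policy-dependent, so I cannot treat the denominator as a deterministic constant as in the fixed-power setting. The key observation that rescues the argument is that the lower bound $\expect{I_{k}} = 1/(\sum_{n=1}^{N}\lambda_{n})$ holds \emph{regardless} of the power allocation, since the idle period is governed solely by the memoryless Poisson interarrival times and is independent of power and scheduling (as noted in Section~\ref{sec:1104}). This uniform lower bound on the expected frame length ensures the denominator grows at least linearly in $K$, which is exactly what is needed to cancel the sublinear growth of $\expect{X_{K}}$ guaranteed by mean rate stability. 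Notably, no properties of $B_{k}(P_{k})$ beyond $B_{k}(P_{k})\geq 0$, and no moment assumptions on the service times, enter the argument.
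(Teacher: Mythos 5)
Your proposal is correct and follows essentially the same route as the paper's own proof in Appendix~\ref{sec:2401}: drop the max in~\eqref{eq:1702}, telescope, divide by $\expect{\sum_{k=0}^{K-1} T_{k}(P_{k})}$, and kill the residual via $\expect{T_{k}(P_{k})} \geq \expect{I_{k}} = 1/(\sum_{n=1}^{N}\lambda_{n})$ together with mean rate stability. Your added remark that the idle-period lower bound holds uniformly in the power allocation is exactly the observation the paper relies on implicitly.
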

\begin{proof}
Given in Appendix~\ref{sec:2401}.
\end{proof}

\subsection{Power-constrained Delay Fairness Policy} \label{sec:1805}


\underline{\textit{Power-constrained Delay Fairness $(\mathsf{PwDelayFair})$ Policy:}}

In the busy period of each frame $k\in\Z^{+}$, after observing $X_{k}$ and $(Y_{n,k})_{n=1}^{N}$:
\begin{enumerate}
\item Use the nonpreemptive strict priority rule $(\pi_{n})_{n=1}^{N}$ that assigns priorities in the decreasing order of $Y_{n,k}/\expect{S_{n}}$; ties are broken arbitrarily. 
\item Allocate power $P_{k}$ for the duration of the busy period, where $P_{k}$ solves:
\begin{align*}
\text{minimize:} &\quad X_{k} \left[ - \pconst + \frac{P_{k}}{\mu(P_{k})} \sum_{n=1}^{N} \lambda_{n} \expect{S_{n}} \right]  \\
&\qquad + \sum_{n=1}^{N} Y_{\pi_{n},k} \, \lambda_{\pi_{n}} \oW_{\pi_{n}}(P_{k})  \\
\text{subject to:} &\quad P_{k} \in [\pmin, \pmax], 
\end{align*}
where $\oW_{\pi_{n}}(P_{k})$ is defined in~\eqref{eq:625}.
\item Update $X_{k}$ and $Y_{n,k}$ for all classes $n$ at every frame boundary by~\eqref{eq:1702} and~\eqref{eq:622}, respectively. In~\eqref{eq:622}, the auxiliary variable $r_{n,k}$ is the solution to
\begin{align*}
\text{minimize:}		&\quad V f_{n}(r_{n,k}) - Y_{n,k} \, \lambda_{n}\, r_{n,k} \\
\text{subject to:}	&\quad 0\leq r_{n,k}\leq \rmax_{n}.
\end{align*}
\end{enumerate}

\subsection{Motivation of the $\mathsf{PwDelayFair}$ Policy}
The construction of the Lyapunov drift argument follows closely with those in the previous problems; details are omitted for brevity. Define vector $\chi_{k} = [X_{k}; Y_{1,k}, \ldots, Y_{N,k}]$, the Lyapunov function $L(\chi_{k}) \triangleq \frac{1}{2} (X_{k}^{2} + \sum_{n=1}^{N} Y_{n,k}^{2})$,  and the one-frame Lyapunov drift $\Delta(\chi_{k}) \triangleq \expect{ L(\chi_{k+1}) - L(\chi_{k}) \mid \chi_{k}}$. We can show there exists a finite constant $C>0$ such that
\begin{equation} \label{eq:2205}
\begin{split}
\Delta(\chi_{k}) &\leq C + X_{k} \expect{P_{k}B_{k}(P_{k}) - \pconst \, T_{k}(P_{k}) \mid \chi_{k} } \\
&\quad  + \sum_{n=1}^{N} Y_{n,k} \, \expect{\sum_{i\in A_{n,k}} \left(W_{n,k}^{(i)}\ - r_{n,k} \right) \mid \chi_{k} }.
\end{split}
\end{equation}
Adding the term $V \sum_{n=1}^{N} \expect{f_{n}(r_{n,k}) \, T_{k}(P_{k}) \mid \chi_{k}}$  to both sides of~\eqref{eq:2205}, where $V>0$ is a control parameter, and evaluating the result under a frame-based policy yields
\begin{equation} \label{eq:1707}
\Delta(\chi_{k}) + V \sum_{n=1}^{N} \expect{f_{n}(r_{n,k})\, T_{k}(P_{k}) \mid \chi_{k}} \leq C + \Psi(\chi_{k}),
\end{equation}
where
\[
\begin{split}
&\Psi(\chi_{k}) \triangleq \expect{T_{k}(P_{k}) \mid \chi_{k}} \sum_{n=1}^{N} Y_{n,k} \, \lambda_{n} \oW_{n,k}(P_{k})  \\
&\quad + X_{k} \expect{P_{k} B_{k}(P_{k}) \mid \chi_{k}} - X_{k} \pconst \, \expect{T_{k}(P_{k}) \mid \chi_{k}} \\
&\quad + \expect{T_{k}(P_{k}) \mid \chi_{k}} \sum_{n=1}^{N} \expect{V f_{n}(r_{n,k}) - Y_{n,k} \, \lambda_{n}\, r_{n,k} \mid \chi_{k}},
\end{split}
\]
where $\oW_{n,k}(P_{k})$ is the average delay of class $n$ if the control  and power allocation in frame $k$ is repeated in every frame.

We are interested in the frame-based policy that minimizes the ratio $\frac{\Psi(\chi_{k})}{\expect{T_{k}(P_{k}) \mid \chi_{k}}}$ in each frame $k\in \Z^{+}$. Lemma~\ref{lem:607} shows the minimizer is a deterministic policy, under which the ratio is equal to
\[
\begin{split}
& \sum_{n=1}^{N} Y_{n,k} \, \lambda_{n} \oW_{n,k}(P_{k}) + X_{k} \left( P_{k}\, \rho_{\text{sum}}(P_{k}) - \pconst \right) \\
& + \sum_{n=1}^{N}  \left( V f_{n}(r_{n,k}) - Y_{n,k} \, \lambda_{n} \, r_{n,k} \right),
\end{split}
\]
where $\rho_{\text{sum}}(P_{k}) \triangleq \sum_{n=1}^{N} \lambda_{n} \expect{S_{n}} / \mu(P_{k})$. Under similar simplifications as the $\mathsf{DynPower}$ policy in Section~\ref{sec:1102}, we can show that the $\mathsf{PwDelayFair}$ policy is the desired policy.

\subsection{Performance of the $\mathsf{PwDelayFair}$ Policy}

\begin{thm} \label{thm:1801}
For any feasible average power constraint $\Pav\leq \pconst$,  the $\mathsf{PwDelayFair}$ policy satisfies $\Pav\leq \pconst$ and yields average delay penalty satisfying
\begin{equation} \label{eq:1805}
\begin{split}
& \limsup_{K\to\infty}\sum_{n=1}^{N} f_{n}\left( \frac{\expect{\sum_{k=0}^{K-1} \sum_{i\in A_{n,k}} W_{n,k}^{(i)}}}{\expect{\sum_{k=0}^{K-1} \abs{A_{n,k}}}}\right)  \\
&\quad  \leq \frac{C \sum_{n=1}^{N} \lambda_{n}}{V} + \sum_{n=1}^{N} f_{n}(\oW_{n}^{*}),
\end{split}
\end{equation}
where $V>0$ is a predefined control parameter.
\end{thm}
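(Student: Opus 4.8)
\textbf{Proof plan for Theorem~\ref{thm:1801}.}
The plan is to follow the same two-part template used for Theorems~\ref{thm:602} and~\ref{thm:603}: first establish mean rate stability of the virtual power queue $\{X_{k}\}_{k=0}^{\infty}$ (which gives the power feasibility $\Pav \leq \pconst$ via Lemma~\ref{lem:1701}) and of the delay queues $\{Y_{n,k}\}_{k=0}^{\infty}$, then bound the time-average penalty. The starting point is the drift-plus-penalty inequality~\eqref{eq:1707}, specialized to the $\mathsf{PwDelayFair}$ policy. Because that policy minimizes the ratio $\Psi(\chi_{k})/\expect{T_{k}(P_{k}) \mid \chi_{k}}$ over all frame-based policies (by Lemma~\ref{lem:607}), I can compare it against a genie policy: the stationary randomized power-control and priority policy $\pi^{*}$ achieving the optimal delays $(\oW_{n}^{*})_{n=1}^{N}$ with $\Pav \leq \pconst$, together with the genie auxiliary choice $r_{n,k}^{*} = \oW_{n}^{*}$ for all $n,k$.

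First I would evaluate $\Psi(\chi_{k})/\expect{T_{k} \mid \chi_{k}}$ under $\pi^{*}$. With $r_{n,k}^{*} = \oW_{n}^{*}$, the terms $-Y_{n,k}\lambda_{n}\oW_{n,k}(P_{k})$ and $+Y_{n,k}\lambda_{n}r_{n,k}^{*}$ cancel (both equal $Y_{n,k}\lambda_{n}\oW_{n}^{*}$), while the power terms $X_{k}(P_{k}\rhosum(P_{k}) - \pconst)$ become nonpositive in expectation because $\pi^{*}$ satisfies $\expect{P_{k}B_{k}}/\expect{T_{k}} = \Pav \leq \pconst$, so $\expect{X_{k}(P_{k}\rhosum(P_{k})-\pconst)} \leq 0$. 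What survives is $V\sum_{n=1}^{N} f_{n}(\oW_{n}^{*})$. Hence under $\pi^{*}$ the ratio is at most $V\sum_{n=1}^{N} f_{n}(\oW_{n}^{*})$, and since $\mathsf{PwDelayFair}$ minimizes the ratio, the same bound holds for it. Multiplying back by $\expect{T_{k} \mid \chi_{k}}$ and substituting into~\eqref{eq:1707} gives a clean bound
\[
\Delta(\chi_{k}) + V \sum_{n=1}^{N} \expect{f_{n}(r_{n,k})\, T_{k}(P_{k}) \mid \chi_{k}} \leq C + V \expect{T_{k}(P_{k}) \mid \chi_{k}} \sum_{n=1}^{N} f_{n}(\oW_{n}^{*}).
\]

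From here the two conclusions follow by the telescoping arguments already used. For stability, drop the nonnegative penalty term to get $\Delta(\chi_{k}) \leq C + VD$ with $D \triangleq \expect{T_{0}(\pmin)}\sum_n f_n(\oW_{n}^{*})$ finite (using $\expect{T_{k}} \leq \expect{T_{0}(\pmin)}$), then sum over $k$, use $L(\chi_{0})=0$, and apply $\expect{X_{K}} \leq \sqrt{\expect{X_{K}^{2}}} \leq \sqrt{2\expect{L(\chi_{K})}}$ (and likewise for each $Y_{n,K}$) to conclude $\expect{X_{K}}/K \to 0$ and $\expect{Y_{n,K}}/K \to 0$. Mean rate stability of $\{X_{k}\}$ then yields $\Pav \leq \pconst$ by Lemma~\ref{lem:1701}. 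For the penalty bound, I keep the penalty term, take expectations, sum over $k\in\{0,\dots,K-1\}$, divide by $V\expect{\sum_{k} T_{k}(P_{k})}$, and use $\expect{T_{k}} \geq \expect{I_{k}} = 1/\sum_n \lambda_n$ to get $\sum_n \expect{\sum_k f_n(r_{n,k})T_k}/\expect{\sum_k T_k} \leq (C\sum_n \lambda_n)/V + \sum_n f_n(\oW_{n}^{*})$; then Jensen's inequality (via \cite[Lemma~7.6]{Nee10book}, as in~\eqref{eq:1106}) pushes $f_n$ inside the ratio, and finally Lemma~\ref{lem:1801} converts the $r_{n,k}$-based time average into the actual delay time average in~\eqref{eq:1805}.

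The main obstacle is the power term $\expect{X_{k}(P_{k}\rhosum(P_{k}) - \pconst) \mid \chi_{k}}$ under the genie policy. Unlike the delay terms, $X_{k}$ is correlated with the frame quantities $P_{k}B_{k}(P_{k})$ and $T_{k}(P_{k})$ only through the policy's own frame-$k$ randomization, which under $\pi^{*}$ is i.i.d.\ across frames and independent of the history encoded in $\chi_{k}$; so $\expect{P_{k}B_{k}(P_{k}) - \pconst T_{k}(P_{k}) \mid \chi_{k}} = \expect{P_{k}B_{k}} - \pconst\expect{T_{k}} \leq 0$ by the renewal-reward identity $\expect{P_{k}B_{k}}/\expect{T_{k}} = \Pav \leq \pconst$. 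Making this cancellation rigorous — in particular verifying that $\pi^{*}$ can be taken stationary and independent of $\chi_{k}$ (guaranteed by the performance-region characterization of Section~\ref{sec:1103}), and that $r_{n,k}^{*}=\oW_n^{*}$ lies in the feasible set $[0,\rmax_n]$ (which holds since $\rmax_n > \oW_n^{*}$ by the footnote choice) — is where the care is needed; the remaining algebra is routine and mirrors the earlier proofs.
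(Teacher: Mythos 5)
Your proposal is correct and follows essentially the same route as the paper's proof of Theorem~\ref{thm:1801}: comparison of the ratio $\Psi(\chi_{k})/\expect{T_{k}(P_{k})\mid\chi_{k}}$ against the optimal stationary randomized policy with genie choices $r_{n,k}^{*}=\oW_{n}^{*}$ (yielding exactly the cancellation and the bound $V\sum_{n}f_{n}(\oW_{n}^{*})$ of~\eqref{eq:1708}), then telescoping to get mean rate stability of $X_{k}$ and $Y_{n,k}$, invoking Lemma~\ref{lem:1701} for $\Pav\leq\pconst$, and reusing the Theorem~\ref{thm:602} machinery (Jensen via \cite[Lemma~7.6]{Nee10book} plus Lemma~\ref{lem:1801}) for~\eqref{eq:1805}. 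Your added checks that $r_{n,k}^{*}\in[0,\rmax_{n}]$ and that $\pi^{*}$ is stationary and $\chi_{k}$-independent are consistent with the paper (the sign convention on the delay and auxiliary terms in $\Psi$ is swapped in your write-up, but the cancellation is unaffected).
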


\begin{proof}[Proof of Theorem~\ref{thm:1801}]
Let $\pirand^{*}$ be the frame-based randomized policy that solves~\eqref{eq:1705}-\eqref{eq:1706}. Let $(\oW_{n}^{*})_{n=1}^{N}$ be the optimal average delay vector, and $\Pav^{*}$, where $\Pav^{*}\leq \pconst$, be the associated power consumption. In frame $k\in\Z^{+}$, the ratio $\frac{\Psi(\chi_{k})}{\expect{T_{k}(P_{k}) \mid \chi_{k}}}$ evaluated under policy $\pirand^{*}$ and  genie decisions $r_{n,k}^{*} = \oW_{n}^{*}$ for all classes $n$ is equal to
\begin{equation} \label{eq:1708}
\begin{split}
&\sum_{n=1}^{N} Y_{n,k} \, \lambda_{n} \oW_{n}^{*} + X_{k} \Pav^{*} - X_{k} \pconst \\
&+ \sum_{n=1}^{N} \left( V f_{n}(\oW_{n}^{*}) - Y_{n,k} \, \lambda_{n} \oW_{n}^{*}\right) \leq V \sum_{n=1}^{N} f_{n}(\oW_{n}^{*}).
\end{split}
\end{equation}
Since the $\mathsf{PwDelayFair}$ policy minimizes $\frac{\Psi(\chi_{k})}{\expect{T_{k}(P_{k}) \mid \chi_{k}}}$ in every frame $k$, the ratio under the $\mathsf{PwDelayFair}$ policy satisfies
\[
\frac{\Psi(\chi_{k})}{\expect{T_{k}(P_{k}) \mid \chi_{k}}} \leq V \sum_{n=1}^{N} f_{n}(\oW_{n}^{*}).
\]
Then~\eqref{eq:1707} under the $\mathsf{PwDelayFair}$ policy satisfies
\begin{equation} \label{eq:1803}
\begin{split}
&\Delta(\chi_{k}) + V \expect{\sum_{n=1}^{N} f_{n}(r_{n,k})\, T_{k}(P_{k}) \mid \chi_{k}} \\
&\quad \leq C + V \expect{T_{k}(P_{k}) \mid \chi_{k}} \sum_{n=1}^{N} f_{n}(\oW_{n}^{*}).
\end{split}
\end{equation}

Removing the second term in~\eqref{eq:1803} and taking expectation, we get
\[
\expect{L(\chi_{k+1})} - \expect{L(\chi_{k})} \leq C + V\expect{T_{k}(P_{k})} \sum_{n=1}^{N} f_{n}(\oW_{n}^{*}).
\]
Summing over $k\in\{0, \ldots, K-1\}$, and using $L(\chi_{0})=0$ yields
\begin{equation} \label{eq:1804}
\expect{L(\chi_{K})} \leq KC + V \expect{\sum_{k=0}^{K-1} T_{k}(P_{k})} \sum_{n=1}^{N} f_{n}(\oW_{n}^{*}) \leq K C_{1}
\end{equation}
where $C_{1} \triangleq C + V \expect{T_{0}(\pmin)} \sum_{n=1}^{N} f_{n}(\oW_{n}^{*})$, and we have used $\expect{T_{k}(P_{k})}\leq \expect{T_{0}(\pmin)}$. Inequality~\eqref{eq:1804} suffices to conclude that queues $X_{k}$ and $Y_{n,k}$ for all classes $n$ are all mean rate stable. From Lemma~\ref{lem:1701} the constraint $\Pav \leq \pconst$ is achieved.  The proof of~\eqref{eq:1805} follows that of Theorem~\ref{thm:602}.
\end{proof}

\section{Simulations} \label{sec:2402}
Here we simulate the $\mathsf{DelayFeas}$ and $\mathsf{DelayFair}$ policy in the first two delay control problems; simulations for the $\mathsf{DynPower}$ and $\mathsf{PwDelayFair}$ policy in the last two delay-power control problems are our future work. The setup is as follows. Consider a two-class $M/M/1$ queue with Poisson arrival rates $(\lambda_{1}, \lambda_{2}) = (1, 2)$, loading factors $(\rho_{1}, \rho_{2}) = (0.4, 0.4)$, and mean exponential service times $\expect{X_{1}} = \rho_{1}/\lambda_{1} = 0.4$ and $\expect{X_{2}} = \rho_{2}/\lambda_{2}= 0.2$ (we use service times directly since there is no power control). The average delay region $\mathcal{W}$ of this two-class $M/M/1$ queue, given in~\eqref{eq:1115}, is
\begin{equation} \label{eq:1119}
\mathcal{W} = \Set{ (\oW_{1}, \oW_{2}) | \begin{gathered} \oW_{1} + \oW_{2} = 2.4 \\ \oW_{1} \geq 0.4, \, \oW_{2} \geq 0.4 \end{gathered}}.
\end{equation}

For the $\mathsf{DelayFeas}$ policy, we consider five sets of delay constraints $(d_{1}, d_{2}) = (0.45, 2.05)$, $(0.85, 1.65)$, $(1.25, 1.25)$, $(1.65, 0.85)$, and $(2.05, 0.45)$; they are all $(0.05, 0.05)$ away from a feasible point on $\mathcal{W}$. For each constraint set $(d_{1}, d_{2})$, we repeat the simulation for $10$ times and take an average on the resulting average delay, where each simulation is run for $10^{6}$ frames. The results are given in Fig.~\ref{fig:1901}, which shows that the $\mathsf{DelayFeas}$ policy adaptively yields feasible average delays in response to different constraints.
\begin{figure}[htb]
\centering
\includegraphics[width=3in]{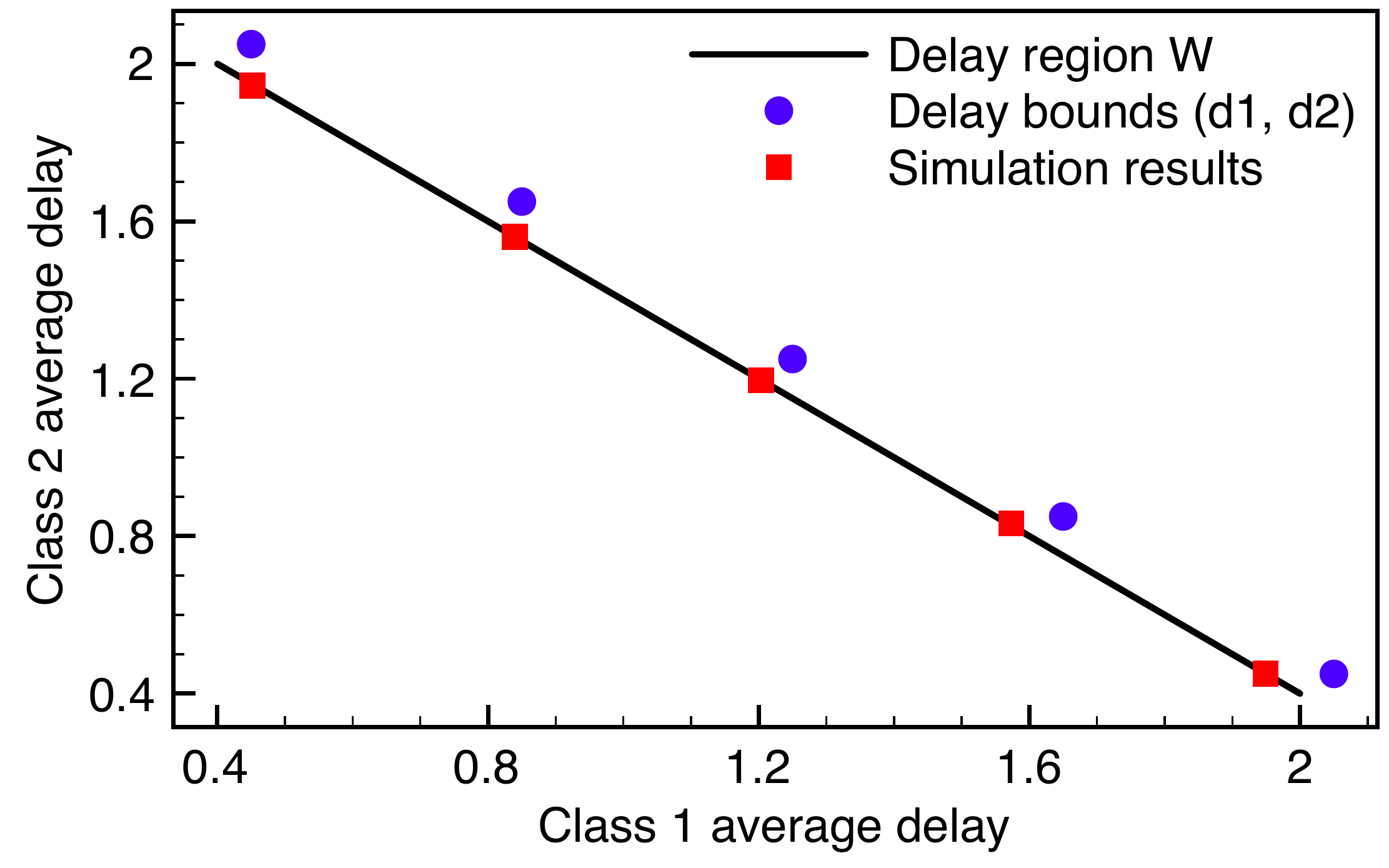}
\caption{The performance of the $\mathsf{DelayFeas}$ policy under different delay constraints $(d_{1}, d_{2})$.}
\label{fig:1901}
\end{figure}

Next, for the $\mathsf{DelayFair}$ policy, we consider the following delay proportional fairness problem:
\begin{align}
\text{minimize:} &\quad \frac{1}{2} \oW_{1}^{2} + 2 \oW_{2}^{2}  \label{eq:1116} \\
\text{subject to:} &\quad (\oW_{1}, \oW_{2}) \in \mathcal{W} \label{eq:1117} \\
&\quad \oW_{1} \leq 2, \oW_{2} \leq 2 \label{eq:1901}
\end{align}
where the delay region $\mathcal{W}$ is given in~\eqref{eq:1119}. The additional delay constraints~\eqref{eq:1901} are chosen to be non-restrictive for the ease of demonstration. The optimal solution to~\eqref{eq:1116}-\eqref{eq:1901} is $(\oW_{1}^{*}, \oW_{2}^{*}) = (1.92, 0.48)$; the optimal delay penalty is $\frac{1}{2} (\oW_{1}^{*})^{2} + 2 (\oW_{2}^{*})^{2} =2.304$. We simulate the $\mathsf{DelayFair}$ policy for different values of control parameter $V\in\{10^{2}, 10^{3}, 5\times 10^{3}, 10^{4}\}$.  The results are in Table~\ref{table:1102}.
\begin{table}[htbp]
\centering
\begin{tabular}{cccc}
$V$ & $\oW_{1}^{\mathsf{DelayFair}}$ & $\oW_{2}^{\mathsf{DelayFair}}$ & Delay penalty \\ \hline
$100$ & $1.611$ & $0.785$ & $2.529$ \\
$1000$ & $1.809$ & $0.591$ & $2.335$ \\
$5000$ & $1.879$ & $0.523$ & $2.312$ \\
$10000$ & $1.894$ & $0.503$ & $2.301$ \\ \hline
Optimal value: & $1.92$ & $0.48$ & $2.304$
\end{tabular}
\caption{The average delays and delay penalty under the $\mathsf{DelayFair}$ policy for different values of control parameter $V$.}
\label{table:1102}
\end{table}
Every entry in Table~\ref{table:1102} is the average over $10$ simulation runs, where each simulation is run for $10^{6}$ frames. As $V$ increases, the $\mathsf{DelayFair}$ policy yields average delays approaching the optimal $(1.92, 0.48)$ and the optimal penalty $2.304$.

\section{Conclusions}
This paper solves constrained delay-power stochastic optimization problems in a nonpreemptive multi-class $M/G/1$ queue from a new mathematical programming perspective. After characterizing the performance region by the collection of all frame-based randomizations of \emph{base policies} that comprise deterministic power control and nonpreemptive strict priority policies, we use the Lyapunov optimization theory to construct dynamic control algorithms that yields near-optimal performance. These policies greedily select and run a base policy in every frame by minimizing a ratio of an expected ``drift plus penalty'' sum over the expected frame size, and require limited statistical knowledge of the system. Time average constraints are turned into virtual queues that need to be stabilized.

While this paper studies  delay and power control in a nonpreemptive multi-class $M/G/1$ queue, our framework shall have a much wider applicability to other stochastic optimization problems over queueing networks, especially those that satisfy strong (possibly generalized~\cite{BaN96}) conservation laws and have polymatroidal performance regions. Different performance metrics such as throughput (together with admission control), delay, power, and functions of them can be mixed together to serve as objective functions or time average constraints. It is of interest to us to explore all these directions.

Another connection is, in~\cite{LaN10arXivb}, we have used the frame-based Lyapunov optimization theory to optimize a general functional objective over an inner bound on the performance region of a restless bandit problem with Markov ON/OFF bandits. This inner bound approach can be viewed as an approximation to such complex restless bandit problems. Multi-class queueing systems and restless bandit problems are two prominent examples of general stochastic control problems. Thus it would be interesting to develop the Lyapunov optimization theory as a unified framework to attack other open stochastic control problems.


\bibliographystyle{IEEEtran}
\bibliography{/Users/chihping/Desktop/bibliography/IEEEabrv,/Users/chihping/Desktop/bibliography/myabrv,/Users/chihping/Desktop/bibliography/mypaperbib}

\appendices

\section{} \label{sec:2204}

\begin{proof}[Proof of Lemma~\ref{lem:2201}]
We index all $N!$ nonpreemptive strict priority policies by $\{\pi_{j}\}_{j\in \mathcal{J}}$, where $\mathcal{J} = \{1, \ldots, N!\}$ is an index set and $\pi_{j}$ denotes the $j$th priority ordering. Consider a randomized policy $\pirand$ defined by the probability distribution $\{\alpha_{j}\}_{j\in\mathcal{J}}$, where $\pirand$ uses priority ordering $\pi_{j}$ for the duration of a frame with probability $\alpha_{j}$ in every frame. Let $W_{n}^{\text{sum}}(\pi_{j})$ denote the sum of queueing delays in class $n$ during a frame in which policy $\pi_{j}$ is used. Likewise, define $W_{n}^{\text{sum}}(\pirand)$ under policy $\pirand$. By conditional expectation, we get
\begin{equation} \label{eq:2210}
\expect{W_{n}^{\text{sum}}(\pirand)} = \sum_{j\in\mathcal{J}} \alpha_{j}\, \expect{W_{n}^{\text{sum}}(\pi_{j})}.
\end{equation}
Next, define $\oW_{n}(\pi_{j})$ as the average queueing delay for class $n$ if policy $\pi_{j}$ is used in every frame. Define $\oW_{n}(\pirand)$ under policy $\pirand$ similarly. From renewal reward theory, we have
\begin{align}
\expect{W_{n}^{\text{sum}}(\pirand)} &= \lambda_{n} \oW_{n}(\pirand) \expect{T} \label{eq:2208} \\
\expect{W_{n}^{\text{sum}}(\pi_{j})} &= \lambda_{n} \oW_{n}(\pi_{j}) \expect{T}, \label{eq:2209}
\end{align}
where $\expect{T}$ is the average frame size. Note that $\expect{T}$ is independent of scheduling policies.  From~\eqref{eq:2210}-\eqref{eq:2209} we get
\begin{equation}\label{eq:2212}
\oW_{n}(\pirand) =  \sum_{j\in\mathcal{J}} \alpha_{j}\, \oW_{n}(\pi_{j}).
\end{equation}
Define $x_{n}(\pi_{j}) \triangleq \rho_{n} \oW_{n}(\pi_{j})$ for all priority orderings $\pi_{j}$. Define $x_{n}(\pi_{\text{rand}})$ similarly. Multiplying~\eqref{eq:2212} by $\rho_{n}$ for all classes $n$ and noting that vertices of the polytope $\Omega$ are performance vectors of strict priority policies, we have
\[
(x_{n}(\pi_{\text{rand}}))_{n=1}^{N} = \sum_{j\in\mathcal{J}} \alpha_{j} \, (x_{n}(\pi_{j}))_{n=1}^{N} \in \Omega,
\]
which proves the first part.

In the converse, for any given vector $(\oW_{n})_{n=1}^{N}$ in the delay region $\mathcal{W}$,  there exists a probability distribution $\{\beta_{j}\}_{j\in\mathcal{J}}$ such that
\[
\rho_{n} \oW_{n} = \sum_{j\in\mathcal{J}} \beta_{j}\, x_{n}(\pi_{j}) \Rightarrow
\oW_{n} =  \sum_{j\in\mathcal{J}} \beta_{j}\, \oW_{n}(\pi_{j})
\]
for all classes $n$. From~\eqref{eq:2212}, the randomized $\pi_{\text{rand}}$ policy defined by the probability distribution $\{\beta_{j}\}_{j\in\mathcal{J}}$ achieves the desired average delays $(\oW_{n})_{n=1}^{N}$.
\end{proof}

\section{} \label{appendix:701}

\begin{lem} \label{lem:608}
In a multi-class $M/G/1$ queue with $N$ classes and a constant service rate (assuming a constant power allocation and no power control), if the first four moments of service times $X_{n}$ are finite for all classes $n \in \{1, \ldots, N\}$, and that the system is stable with $\sum_{n=1}^{N} \lambda_{n}\expect{X_{n}}<1$, then, in every frame $k\in\Z^{+}$, the expectation
\[
\expect{\Big(\sum_{i\in A_{n,k}} \left(W_{n,k}^{(i)} - d_{n}\right)\Big)^{2}}
\]
is finite for all classes $n$ under any work-conserving policy.
\end{lem}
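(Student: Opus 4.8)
The plan is to reduce the finiteness of this second moment to two fourth-moment bounds on frame-level quantities that are standard for the $M/G/1$ busy period, and then to invoke the branching structure of that busy period. First I would exploit work-conservation: every class-$n$ job arriving in frame $k$ begins service before the busy period ends, so its queueing delay obeys $0 \le W_{n,k}^{(i)} \le B_k$. Applying the triangle inequality termwise then gives
\[
\abs{\sum_{i\in A_{n,k}} \paren{W_{n,k}^{(i)} - d_n}} \le \abs{A_{n,k}} \paren{B_k + d_n},
\]
so that
\[
\paren{\sum_{i\in A_{n,k}} \paren{W_{n,k}^{(i)} - d_n}}^2 \le \abs{A_{n,k}}^2 \paren{B_k + d_n}^2.
\]
Expanding $\paren{B_k + d_n}^2$ and applying the Cauchy--Schwarz inequality to each resulting term reduces the problem to showing that $\expect{B_k^4}$ and $\expect{\abs{A_{n,k}}^4}$ are both finite; the cross term $\expect{\abs{A_{n,k}}^2 B_k}$ and the pure term $\expect{\abs{A_{n,k}}^2}$ are then dominated by these same two quantities. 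The correlation between $\abs{A_{n,k}}$ and $B_k$---a longer busy period tends to carry more arrivals---is exactly why the two expectations cannot be factored and why Cauchy--Schwarz, rather than independence, is the right tool.

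For $\expect{B_k^4}$: since the sample path of unfinished work is policy-independent, $B_k$ is distributed as the busy period of the aggregate single-class $M/G/1$ queue with Poisson rate $\lambdasum = \sum_{n=1}^N \lambda_n$ and service time equal to the mixture that realizes $X_n$ with probability $\lambda_n/\lambdasum$. By the hypothesis that the first four moments of each $X_n$ are finite, this mixture has finite fourth moment $\sum_{n=1}^N (\lambda_n/\lambdasum)\expect{X_n^4} < \infty$. It is then a classical fact that finiteness of the $m$-th moment of the service time implies finiteness of the $m$-th moment of the busy period under the stability condition $\sum_{n=1}^N \rho_n < 1$; taking $m=4$ yields $\expect{B_k^4} < \infty$.

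For $\expect{\abs{A_{n,k}}^4}$: because the system is empty at both frame endpoints, the number of jobs served during the busy period equals the total number of arrivals in the frame, so $\abs{A_{n,k}}$ is bounded above by that total count $M_k$, and it suffices to bound $\expect{M_k^4}$. Here I would use the branching-process representation of the $M/G/1$ busy period: $M_k$ is the total progeny of a subcritical Galton--Watson process whose offspring distribution is the number of Poisson arrivals during one service time, i.e.\ a Poisson$(\lambdasum S)$ count mixed over the service time $S$. Conditionally Poisson moments are polynomials in the mean $\lambdasum S$, so the offspring distribution has finite fourth moment exactly when $\expect{S^4} < \infty$, which holds. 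Since the total progeny of a subcritical branching process has a finite $m$-th moment whenever its offspring distribution does, $\expect{M_k^4} < \infty$, and hence $\expect{\abs{A_{n,k}}^4} < \infty$.

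The hard part will be establishing these two fourth-moment bounds, both of which carry self-referential structure: the busy period satisfies $B = S + \sum_j B_j$ and the progeny count satisfies $M = 1 + \sum_j M_j$, with the unknown appearing on both sides. The clean route is the Galton--Watson total-progeny viewpoint, where the passage from offspring moments to total-progeny moments is the known result one simply invokes; the alternative of deriving the moments directly from the recursion by induction on the moment order (using subcriticality to solve for the top-order term) is elementary but calculation-heavy. Everything else---the termwise delay bound, the expansion, and Cauchy--Schwarz---is routine once $\expect{B_k^4}$ and $\expect{\abs{A_{n,k}}^4}$ are in hand.
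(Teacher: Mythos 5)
Your proposal is correct and takes essentially the same route as the paper's proof: both bound each queueing delay by the busy period $B_k$, reduce via Cauchy--Schwarz to the finiteness of $\expect{B_k^4}$ and of the fourth moment of the number of jobs served in the frame, and obtain these two bounds from the branching structure of the busy period --- your Galton--Watson total-progeny representation is exactly the paper's LIFO-with-preemptive-priority decomposition $B = X + \sum_{m=1}^{M} B(m)$ and $N = 1 + \sum_{m=1}^{M} N(m)$. The only difference is presentational: you invoke the classical facts that subcritical branching (equivalently, stable $M/G/1$ busy-period) moments are finite whenever the corresponding offspring/service-time moments are, whereas the paper sketches the same conclusion by raising these recursions to the fourth power and taking expectations.
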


\begin{proof}[Proof of Lemma~\ref{lem:608}]
For brevity, we only give a sketch of proof. Using $\expect{(a-b)^{2}} \leq 2 \expect{a^{2}+b^{2}}$, it suffices to show
\[
\expect{\Big(\sum_{i\in A_{n,k}} W_{n,k}^{(i)}\Big)^{2}}, \quad \expect{ d_{n}^{2} \abs{A_{n,k}}^{2}}
\]
are both finite. We only show the first expectation is finite; the finiteness of the second expectation follows that of the first expectation. Define $N_{k}$ as the number of arrivals of all classes served in frame $k$; we have $\abs{A_{n,k}} \leq N_{k}$ for all $k$ and classes $n$. In the $k$th frame, since the queueing delay $W_{n,k}^{(i)}$ of each job $i\in A_{n,k}$ is bounded by the busy period $B_{k}$, we have
\[
\expect{\Big(\sum_{i\in A_{n,k}} W_{n,k}^{(i)}\Big)^{2}} \leq \expect{B_{k}^{2} N_{k}^{2}}.
\]
Note that $B_{k}$ and $N_{k}$ are dependent because a large busy period serves more jobs. By Cauchy-Schwarz inequality we have
\[
\expect{B_{k}^{2} N_{k}^{2}} \leq \sqrt{\expect{B_{k}^{4}}\expect{N_{k}^{4}}}.
\]
It suffices to show that both $\expect{B_{k}^{4}}$ and $\expect{N_{k}^{4}}$ are finite.

First we argue $\expect{B_{k}^{4}}<\infty$. In the following we drop the index $k$ for notational convenience. Since the frame size $B$ is the same under any work-conserving policy, we consider LIFO scheduling with preemptive priority. In this scheme, let $a_{0}$ denote the arrival that starts the current busy period. Arrival $a_{0}$ can be of any class, and the duration it stays in the system is equal to the busy period $B$. Next, let $\{a_{1}, \ldots, a_{M}\}$ denote the $M$ jobs that arrive \emph{during the service of job $a_{0}$}.  Let $B(1), \ldots, B(M)$ denote the duration they stay in the system. Under LIFO with preemptive priority, we observe that $B(1), \ldots, B(M)$ are independent and identically distributed with the starting busy period $B$ (since any new arrival \emph{never sees} any previous arrivals, and starts a new busy period). Consequently, we have
\begin{equation} \label{eq:1108}
B = X + \sum_{m=1}^{M} B(m),
\end{equation}
where $X$ denote the service time of $a_{0}$. Note also that each duration $B(m)$ for all $m\in\{1, \ldots, M\}$ is independent of $M$. By taking square and expectation of~\eqref{eq:1108}, we can compute $\expect{B^{2}}$ in closed form and show that it is finite if the first two moments of $X_{n}$ for all $n$ are finite. Likewise, by raising~\eqref{eq:1108} to the third and fourth power and taking expectation, we can compute $\expect{B^{3}}$ and $\expect{B^{4}}$ and show they are finite if the first four moments of $X_{n}$ are finite (showing $\expect{B^{4}}<\infty$ requires the finiteness of the first three moments of $B$).

Likewise, to show $\expect{N^{4}}$ is finite, under LIFO with preemptive priority we observe
\begin{equation} \label{eq:1109}
N = 1 + \sum_{m=1}^{M} N(m),
\end{equation}
where $N(m)$ denotes the number of arrivals, including $a_{m}$, served during the course of arrival $a_{m}$ staying in the system;  $N(m)$ are i.i.d. and independent of $M$. By raising~\eqref{eq:1109} to the second, third, and fourth power and taking expectation, we can compute $\expect{N^{4}}$ in closed form and show it is finite.
\end{proof}

\section{} \label{sec:2203}

\begin{proof}[Proof of Lemma~\ref{lem:1801}]
From~\eqref{eq:622} we get
\[
Y_{n,k+1} \geq Y_{n,k} - r_{n,k} \abs{A_{n,k}} + \sum_{i\in A_{n,k}} W_{n,k}^{(i)}.
\]
Summing over $k\in\{0, \ldots, K-1\}$ and using $Y_{n,0}=0$ yields
\[
\sum_{k=0}^{K-1} \sum_{i\in A_{n,k}} W_{n,k}^{(i)} - Y_{n,K} \leq \sum_{k=0}^{K-1} r_{n,k} \abs{A_{n,k}}.
\]
Taking expectation and dividing by $\lambda_{n}\, \expect{\sum_{k=0}^{K-1} T_{k}}$ yields
\begin{equation} \label{eq:2201}
\begin{split}
&\frac{ \expect{\sum_{k=0}^{K-1}  \sum_{i\in A_{n,k}} W_{n,k}^{(i)} } }{\lambda_{n} \expect{\sum_{k=0}^{K-1} T_{k}}} - \frac{\expect{Y_{n,K}}}{\lambda_{n} K \expect{T_{0}}} \\
&\quad \leq \frac{\expect{\sum_{k=0}^{K-1} r_{n,k} \abs{A_{n,k}}}}{\lambda_{n} \expect{\sum_{k=0}^{K-1} T_{k}}}.
\end{split}
\end{equation}
where in the second term we use $\expect{T_{k}} = \expect{T_{0}}$ for all $k$. In the last term of~\eqref{eq:2201}, since the value $r_{n,k}$ is  independent of $\abs{A_{n,k}}$ and $T_{k}$, we get
\begin{equation} \label{eq:2202}
\frac{\expect{\sum_{k=0}^{K-1} r_{n,k} \abs{A_{n,k}}}}{\lambda_{n} \expect{\sum_{k=0}^{K-1} T_{k}}} = \frac{\expect{\sum_{k=0}^{K-1} r_{n,k}\, T_{k}}}{\expect{\sum_{k=0}^{K-1} T_{k}}}.
\end{equation}
Defining as $\theta^{(n)}_{K}$ the left side of~\eqref{eq:2201}  and using~\eqref{eq:2201}~\eqref{eq:2202} yields
\begin{equation} \label{eq:1111}
\theta^{(n)}_{K} \leq \frac{\expect{\sum_{k=0}^{K-1} r_{n,k}\, T_{k}}}{\expect{\sum_{k=0}^{K-1} T_{k}}}.
\end{equation}
Since $f_{n}(\cdot)$ is nondecreasing for all classes $n$, we get
\begin{equation} \label{eq:2203}
\begin{split}
& \limsup_{K\to\infty} \sum_{n=1}^{N} f_{n}\left( \theta^{(n)}_{K} \right) \\
&\quad \leq \limsup_{K\to\infty} \sum_{n=1}^{N} f_{n}\left( \frac{\expect{\sum_{k=0}^{K-1} r_{n,k}\, T_{k}}}{\expect{\sum_{k=0}^{K-1} T_{k}}} \right).
\end{split}
\end{equation}
Define the value
\begin{equation} \label{eq:1802}
\eta^{(n)}_{K} \triangleq \frac{ \expect{\sum_{k=0}^{K-1}  \sum_{i\in A_{n,k}} W_{n,k}^{(i)} } }{ \expect{\sum_{k=0}^{K-1}\abs{A_{n,k}} }  } = \theta_{K}^{(n)} + \frac{\expect{Y_{n,K}}}{\lambda_{n} K \expect{T_{0}}}. 
\end{equation}
To complete the proof, from~\eqref{eq:2203} it suffices to show
\begin{equation} \label{eq:1114}
\limsup_{K\to\infty} \sum_{n=1}^{N} f_{n}(\eta^{(n)}_{K}) = \limsup_{K\to\infty} \sum_{n=1}^{N} f_{n}(\theta^{(n)}_{K}).
\end{equation}
Let the left-side of~\eqref{eq:1114} attains its $\limsup$ in the subsequence $\{K_{m}\}_{m=1}^{\infty}$. It follows
\begin{align*}
\limsup_{K\to\infty} \sum_{n=1}^{N} f_{n}(\eta^{(n)}_{K})
&= \lim_{m\to\infty} \sum_{n=1}^{N} f_{n}(\eta^{(n)}_{K_{m}}) \\
&\stackrel{(a)}{=}  \sum_{n=1}^{N} f_{n}\left(\lim_{m\to\infty} \eta^{(n)}_{K_{m}}\right) \\
&\stackrel{(b)}{=}  \sum_{n=1}^{N} f_{n}\left(\lim_{m\to\infty} \theta^{(n)}_{K_{m}}\right)  \\
&\leq \limsup_{K\to\infty} \sum_{n=1}^{N} f_{n}(\theta^{(n)}_{K}),
\end{align*}
where (a) follows the continuity of $f_{n}(\cdot)$ for all classes $n$, (b) follows~\eqref{eq:1802} and mean rate stability of $Y_{n,k}$. The other direction can be proved similarly.
\end{proof}

\section{} \label{sec:2201}

We show how to remove the dependence on the second moments of job sizes $S_{n}$  in the $\mathsf{DynPower}$ policy in Section~\ref{sec:1102}. Using~\eqref{eq:625}, we rewrite~\eqref{eq:607} as
\begin{equation} \label{eq:2206}
\begin{split}
& \hat{R} \Bigg[ \left( \frac{V}{\hat{R}} \sum_{n=1}^{N} \lambda_{n} \expect{S_{n}} \right) \frac{P_{k}}{\mu(P_{k})} \\
& + \sum_{n=1}^{N} \frac{Z_{\pi_{n},k} \, \lambda_{\pi_{n}}}{(\mu(P_{k}) - \sum_{m=0}^{n-1} \hat{\rho}_{\pi_{m}})(\mu(P_{k}) - \sum_{m=0}^{n} \hat{\rho}_{\pi_{m}})}  \Bigg]
\end{split}
\end{equation}
where 
\[
\hat{R} \triangleq \frac{1}{2} \sum_{n=1}^{N} \lambda_{n} \expect{S_{n}^{2}}, \ \hat{\rho}_{\pi_{m}} \triangleq \begin{cases}  \lambda_{\pi_{m}} \expect{S_{\pi_{m}}}, & 1\leq m \leq N \\ 0, & \text{otherwise.} \end{cases}
\]
By ignoring constant $\hat{R}$ and redefining $\wt{V} \triangleq V/\hat{R}$ in~\eqref{eq:2206}, it is equivalent in the $k$th frame of the $\mathsf{DynPower}$ policy to allocate power $P_{k}\in[\pmin, \pmax]$ that minimizes
\begin{equation} \label{eq:2207}
\begin{split}
& \left( \wt{V} \sum_{n=1}^{N} \lambda_{n} \expect{S_{n}} \right) \frac{P_{k}}{\mu(P_{k})} \\
&\quad + \sum_{n=1}^{N} \frac{Z_{\pi_{n},k} \, \lambda_{\pi_{n}}}{(\mu(P_{k}) - \sum_{m=0}^{n-1} \hat{\rho}_{\pi_{m}})(\mu(P_{k}) - \sum_{m=0}^{n} \hat{\rho}_{\pi_{m}})}.
\end{split}
\end{equation}
The sum~\eqref{eq:2207} does not depend on second moments of job sizes. From Theorem~\ref{thm:603} and using $V = \wt{V}\hat{R}$, this alternative policy yields average power $\Pav$ satisfying
\[
\Pav \leq \frac{C\sum_{n=1}^{N}\lambda_{n}}{\wt{V} \hat{R}} + P^{*},
\]
and we preserve the property that the resulting average $\Pav$ is $O(1/\wt{V})$ away from the optimal $P^{*}$.

\section{} \label{sec:2401}

\begin{proof}[Proof of Lemma~\ref{lem:1701}]
From~\eqref{eq:1702} we have
\[
X_{k+1} \geq X_{k} + P_{k} B_{k}(P_{k}) - \pconst \, T_{k}(P_{k}).
\]
Summing over $k\in\{0, \ldots, K-1\}$, taking expectation, and using $X_{0}=0$ yields
\[
\expect{X_{K}} \geq \expect{\sum_{k=0}^{K-1} P_{k} B_{k}(P_{k})} - \pconst \, \expect{\sum_{k=0}^{K-1} T_{k}(P_{k})}.
\]
Dividing by $\expect{\sum_{k=0}^{K-1} T_{k}(P_{k})}$ and passing $K\to\infty$ yields
\begin{align*}
\Pav &\leq \pconst + \limsup_{K\to\infty} \frac{\expect{X_{K}}}{K} \frac{K}{\expect{\sum_{k=0}^{K-1} T_{k}(P_{k})}} \\
&\stackrel{(a)}{\leq} \pconst + \limsup_{K\to\infty} \frac{\expect{X_{K}}}{K} \sum_{n=1}^{N} \lambda_{n},
\end{align*}
where (a) uses $\expect{T_{k}(P_{k})} \geq \expect{I_{k}} = 1/ (\sum_{n=1}^{N} \lambda_{n})$.
Then the result follows by mean rate stability of queue $\{X_{k}\}_{k=0}^{\infty}$.
\end{proof}

\end{document}